\newtheorem{thm}{Theorem}
\newtheorem{lemma}{Lemma}[section]
\newtheorem{prop}{Proposition}[section]
\newtheorem{defn}{Definition}
\newtheorem{remark}{Remark}[section]
\newtheorem{example}{Example}[section]
\DeclareSymbolFont{script}{U}{eus}{m}{n}
\DeclareMathSymbol{\Wedge}{0}{script}{"5E}
\DeclareSymbolFont{rmslops}{OT1}{cmr}{m}{sl}
\DeclareSymbolFontAlphabet{\mathrmsl}{rmslops}
\def\operator@font{\mathgroup\symrmslops}
\newcommand{\acknowledge}{\subsection*{Acknowledgements}}
\newcommand{\calsyma}[2]{\newcommand{#1}{{\mathcal{#2}}}}
\newcommand{\calsymb}[2]{\newcommand{#1}{{\mathscr{#2}}}}
\newcommand{\bbsymb}[2]{\newcommand{#1}{{\mathbb{#2}}}}
\newcommand{\liealg}[2]{\newcommand{#1}{{\mathfrak{#2}}}}
\bbsymb\C{C} \bbsymb\HQ{H}\bbsymb\N{N} \bbsymb\Q{Q}
\bbsymb\R{R} \bbsymb\V{V} \bbsymb\W{W} \bbsymb\Z{Z}
\calsymb\cA{A} \calsymb\cB{B} \calsymb\cC{C} \calsymb\cD{D} \calsymb\cE{E}
\calsymb\cF{F} \calsymb\cG{G} \calsymb\cH{H} \calsymb\cI{I} \calsymb\cJ{J}
\calsymb\cK{K} \calsymb\cL{L} \calsymb\cM{M} \calsymb\cN{N} \calsymb\cO{O}
\calsymb\cP{P} \calsyma\cQ{Q} \calsymb\cR{R} \calsymb\cS{S} \calsymb\cT{T}
\calsymb\cU{U} \calsymb\cV{V} \calsymb\cW{W} \calsymb\cX{X} \calsymb\cY{Y}
\calsymb\cZ{Z}
\liealg\gl{gl}\liealg\sgl{sl}\liealg\symp{sp}
\liealg\g{g}\liealg\p{p}
\DeclareMathOperator{\Aff}{Aff}
\DeclareMathOperator{\spanu}{span}
\DeclareMathOperator{\id}{id}
\renewcommand{\d}{{\mathrmsl d}}
\newcommand{\hp}[1][^]{#1{1,0}}
\newcommand{\am}[1][^]{#1{0,1}}
\newcommand{\Vhp}{\cV\hp}
\newcommand{\Vam}{\cV\am}
\newcommand{\php}{\phi\hp[_]}
\newcommand{\phm}{\phi\am[_]}
\newcommand{\Lp}{\cL\hp[_]}
\newcommand{\Lm}{\cL\am[_]}
\newcommand{\sub}{\subseteq}
\newcommand{\tp}{\otimes}
\newcommand{\ds}{\oplus}
\newcommand{\ra}[1]{{\raise6pt\hbox{$#1$}}}
\newcommand{\mult}{^{\scriptscriptstyle\smash[b]{\times}}}
\begin{document}

\title{Quaternion-K\"ahler manifolds near maximal fixed points sets of $S^1$-symmetries}
\author{Aleksandra  Bor\'owka}
\address{Aleksandra W. Bor\'owka \\
Institute of Mathematics\\ Polish Academy of Sciences\\
ul. \'Sniadeckich 8\\
00-656 Warszawa\ Poland\\
aleksandra.borowka@uj.edu.pl}
%\email{aleksandra.borowka@uj.edu.pl}
%\date{Received: date / Accepted: date}
\maketitle
%\excludecomment{abstract}\def\endabstract{}% Comment out to include abstract
\begin{abstract}
Using quaternionic Feix--Kaledin construction we provide a local classification of quaternion-K\"ahler metrics with a rotating 
$S^1$-symmetry with the fixed point set submanifold $S$ of maximal possible dimension. For any K\"ahler manifold $S$ equipped with a line bundle with a unitary connection of curvature proportional to the K\"ahler form we explicitly construct a holomorphic contact distribution on the twistor space obtained by the quaternionic Feix-Kaledin construction from this data. Conversely, we show that quaternion-K\"ahler metrics with a rotating $S^1$-symmetry induce on the fixed point set of maximal dimension a K\"ahler metric together with  a unitary connection  on a holomorphic line bundle with curvature proportional to the K\"ahler form and the two constructions are inverse to each other. Moreover, we study the case when $S$ is compact, showing that in this case the quaternion-K\"ahler geometry is determined by the K\"ahler metric on the fixed point set (of maximal possible dimension) and by the contact line bundle. Finally, we relate the results to the c-map construction showing that the family of quaternion-K\"ahler manifolds obtained from a fixed K\"ahler metric on $S$ by varying the line bundle and the hyperk\"ahler manifold obtained by hyperk\"ahler Feix--Kaledin construction form $S$ are related by hyperk\"ahler/quaternion-K\"ahler correspondence.
\keywords{ quaternion-K\"ahler manifold \and rotating circle symmetry \and c-map \and quaternion-K\"ahler/hyperk\"ahler correspondence}
%\subclass{}
\end{abstract}
\section{Introduction}

A classification of hyperk\"ahler metrics with rotating $S^1$-symmetry near a fixed points submanifold $S$ of maximal dimension was provided by B. Feix \cite{Feix} and, independently,  by D. Kaledin \cite{Kal}. They have shown that locally such metrics are uniquely determined by the real-analytic K\"ahler  metric on $S$ and for any such metric the corresponding hyperk\"ahler  structure can be constructed on a neighbourhood of the zero section of the cotangent bundle of $S$. The twistorial construction of Feix have been generalized in \cite{BC}, where a classification of quaternionic structures with rotating $S^1$-symmetry near 
the fixed points submanifold $S$ of dimension $n$ was given. In this case $S$ is, so called, real-analytic c-projective manifold with c-projective curvature of type $(1,1)$ and the structure is constructed on a neighbourhood of the zero section of the tangent bundle twisted by some unitary line bundle. Although considering a c-projective structure (instead of a K\"ahler metric) on $S$ is interesting mainly from non-metric point of view, introducing a twist by a line bundle with a connection allows obtaining non-zero scalar curvature analogues of hyperk\"ahler metrics namely quaternion-K\"ahler metrics. In \cite{BC} it is shown that all quaternionic structures admitting an $S^1$-action in the neighbourhood of the fixed points set $S$ of maximal dimension arise locally in this way, provided that the action on $S$ is not triholomorphic, i.e., that the action is rotating complex structures from the quaternionic family. 

The aim of this paper is to use results from \cite{BC} to provide an analogous classification result for quaternion-K\"ahler metrics with rotating $S^1$-symmetry. We show that near the fixed points submanifold $S$ of dimension $n$ such metrics are uniquely determined by the real-analytic K\"ahler metric on $S$ and a holomorphic line bundle on $S$ equipped with a real-analytic unitary connection with curvature proportional to the K\"ahler form. On the twistor space, the manifold $S$ corresponds to Legendrian submanifolds $S^{1,0}\cong S$ and $S^{0,1}\cong\overline{S}$ and the square of the line bundle is the contact line bundle restricted to $S^{1,0}$. When $S$ is compact, then by uniqueness of hermitian-Einstein metrics we deduce that the metric locally near $S$ is determined by the K\"ahler structure on $S$ and the contact line bundle. Note, that in \cite{BC}  the case when $S$ is K\"ahler-Einstein and the line bundle is a root of the canonical bundle is already studied, namely it has been proved that then the constructed quaternionic manifold $M$ is quaternion-K\"ahler. The proof is non-explicit and involves studying hypercomplex cones (i.e., Swann bundles) over $M$ and Armstrong cones over $S$. In particular the c-projective Armstrong cone over $S$ carries a natural  K\"ahler metric if and only if $S$ is  K\"ahler-Einstein which could suggest that this condition is necessary. In this paper we show that this is not true: starting from any K\"ahler manifold with a holomorphic line bundle on $S$ with a real-analytic unitary connection such that its curvature is proportional to the K\"ahler form we explicitly construct a holomorphic contact distribution on the twistor space. 

For a fixed K\"ahler structure on $S$ by varying the line bundle we obtain a family of  quaternion-K\"ahler manifolds which for the  trivial line bundle degenerates to a unique hyperk\"ahler metrics. As already observed in \cite{BC} when $S$ is  K\"ahler-Einstein, this is related to hyperk\"ahler/quaternion-K\"ahler correspondence. In mathematical language this has been introduced by A. Haydys \cite{Hay}, where he uses a concept of the physicist's c-map and the twistorial description of this correspondence was given by N. Hitchin \cite{Hit2}. The manifolds appearing in the correspondence are equipped with a rotating $S^1$-symmetry and it is assumed that the action is locally free. We show that our approach extends this correspondence to neighbourhoods of fixed points sets of the $S^1$-action, provided that it is of maximal dimension.

The structure of the paper is as follows. In Section \ref{Pre} we provide necessary background including the outline of quaternionic Feix--Kaledin construction \cite{BC}. In Section \ref{nece} we give conditions on the data in quaternionic Feix--Kaledin construction which are  necessary to obtain as a result a quaternion-K\"ahler manifold with a rotation $S^1$-isometry and in Section \ref{dist} we prove that this continuous are sufficient. In Section \ref{gl} we discuss the case when $S$ is compact, proving in particular that the  quaternion-K\"ahler on a neighbourhood of $S$ is fully encoded in the K\"ahler metric on $S$ and the contact line bundle restricted to $S$ on the twistor space. We finish in Section \ref{ex} by discussing examples and relation with the  hyperk\"ahler/quaternion-K\"ahler correspondence.

 \section{Preliminaries}\label{Pre}
 \subsection{Complexification}\label{comp} We start by giving a short summary about complexifications. For more detailed description see \cite{Biel}, \cite{BC}, \cite{Le}. For any real-analytic $m$-dimensional manifold $S$ using holomorphic extensions of real analytic coordinates we can construct its complexification i.e., a complex manifold of complex dimension $m$ with a real structure whose fixed points set - the real submanifold - is isomorphic to $S$. Similarly we can use holomorphic extensions to extend real-analytic objects like bundles and connections from $S$ to its complexification. Note that locally near the real submanifold any two complexifications are equal. If $S$ is itself a complex manifold, then there is a convenient model of complexification namely the manifold $S^{\mathbb{C}}=S\times \overline{S}$, where $\overline{S}$ denote $S$ with the opposite complex structure, the real structure is the involution $(x,y)\mapsto (y,x)$ and its fixed points set is the diagonal. In this setting, if we consider complex local coordinates on $S$ and their complex conjugates $z_1,\overline{z}_1,\ldots, z_n,\overline{z}_n$, $m=2n$, then the complexification is achieved by replacing coordinates $\overline{z}_1,\ldots, \overline{z}_n$ by independent coordinates $\tilde{z}_1,\ldots,\tilde{z}_n$ and then the real submanifold is given by the equation $\tilde{z}_i=\overline{z}_i$.
The manifold $S^{\mathbb{C}}$ carries two transverse foliations  given by the product structure and the leaves are isomorphic to $S$ and $\overline{S}$ correspondingly. We denote the foliations as the $(1,0)$-foliation and the $(0,1)$-foliation correspondingly and the leave spaces by $S^{1,0}$ and $S^{0,1}$.
 
 For a holomorphic rank $k$ vector bundle $L$ on a complex manifold $S$ we will understand by its complexification $L^{\mathbb{C}}$ a holomorphic extension to a tubular neighbourhood of the diagonal in $S^{\mathbb{C}}$ of the bundle $L\otimes_{\mathbb{R}}\mathbb{C}$. $L^{\mathbb{C}}$ is a holomorphic bundle of rank $2k$ and the holomorphic structure on $L$ induces its decomposition into a sum of two holomorphic  bundles of rank $k$ $$L^{\mathbb{C}}=L^{1,0}\oplus L^{0,1},$$ such that $L^{1,0}$ is trivial along the leaves of the $(0,1)$-foliation and is the pull-back of $L$ from $S^{1,0}\cong S$ and $L^{0,1}$ is trivial along the leaves of the $(1,0)$-foliation and is the pull-back of $\overline{L}$ from $S^{0,1}\cong \overline{S}$. Any connection $\nabla$ compatible with a holomorphic structure on $L$ gives connections on $L^{1,0}$ and $L^{0,1}$ trivial in the directions where bundles are trivial and one can show that the connections are flat in the other direction exactly when the curvature  of $\nabla$ is of type $(1,1)$.
 \subsection{Twistor theory for quaternionic manifolds}\label{twist}
 
 Quaternionic manifold  $M$ is a smooth manifold of dimension $4n$, $n>1$ equipped with a rank $3$ subbundle $Q\subseteq End (TM)$ spanned locally by three anti-commuting almost complex structures satisfying quaternionic relations, together with a torsion-free connection $D$ with $DQ=0$ called a quaternionic connection. Inside $Q$ there is a $2$-sphere subbundle $\cQ$ of almost complex structures and the total space of this bundle, denoted $Z$, is called a twistor space. It turns out that $Z$ is equipped with an almost complex structure $\mathcal{I}$ and the existence of torsion-free quaternionic connection is exactly the integrability condition for $\mathcal{I}$ (see \cite{Sal}). Note that a torsion-free quaternionic connection is never unique (in fact if one such connection exists, there is an affine space of such connections modeled on the space of $1$-forms on $M$) but one can show that the definition of $\mathcal{I}$ does not depend on the choice of $D$. Therefore twistor spaces of quaternionic manifolds are complex manifolds of complex dimension $2n+1$ and they are equipped with the following objects:
 \begin{itemize}
 \item[(i)] family of projective lines with normal bundles $\mathbb{C}^{2n}\otimes\mathcal{O}(1)$ called twistor lines,
 \item[(ii)] a real structure $\tau$ without fixed points which  is invariant on some twistor lines, called real twistor lines.
 \end{itemize}
 A twistor space fully determines quaternionic geometry on $M$ and  it has been proved in  \cite{PP} that any complex  $(2n+1)$-manifold $Z$ with properties (i) and (ii) is locally (near some fixed real twistor line) a twistor space of some quaternionic manifold,  which as a smooth manifold is the moduli space of real twistor lines.

On a quaternionic manifold one can always define a $Q$-hermitian metric $g$, however, the Levi-Civita connection of such metric needs not be quaternionic (i.e., in general $D^g$ may not preserve the bundle $Q$). If the Levi-Civita connection  is quaternionic, then $(M, g, Q)$ is called quaternion-K\"ahler manifold. Quaternionic connections give $\tau$-invariant complex distributions  on $Z$ (but not necessarily holomorphic, see \cite{Alex}, Section 5.3 for characterization of holomorphic distributions).
\begin{defn}
A holomorphic contact manifold $Z$  is a $(2n+1)$-dimensional complex manifold equipped with a contact form $\theta\in\Gamma (T^*Z\otimes L)$ with values in some line bundle $L$ with the property
$$\theta\wedge(\d\theta)^n\neq 0.$$
A distribution is called contact if it is the kernel of a contact form.
\end{defn}
 If a quaternionic connection connection is the Levi-Civita connection of a  quaternion-K\"ahler metric then the distribution is contact and holomorphic and in particular $Z$ is a holomorphic contact manifold (\cite{Sal0}, Theorem 4.3). 

 Moreover,  as shown in \cite{LeBrun} Theorem 1.3, the following converse is also true. Let $Z$ be a complex manifold of complex dimension $2n+1$ satisfying the conditions (i), (ii) and
\begin{itemize}
\item[(iii)] Z is a holomorphic contact manifold with $\tau$-invariant contact distribution transversal to real twistor lines,
\end{itemize}
then $Z$ is locally a twistor spaces of a pseudo quaternion-K\"ahler manifold.

In this case, the line bundle $L$ is the vertical bundle of the twistor fibration (i.e., the bundle of tangent spaces to twistor lines, note that for general quaternionic manifold this bundle needs not to be holomorphic \cite{Kob}) and it is the $(n+1)$-st root of the anti-canonical bundle of $Z$.

 It is often convenient to keep in mind the complexified version of the twistor correspondence: the moduli space of all twistor lines is locally a $4n$-dimensional complex manifold $M^{\mathbb{C}}$ with a real structure induced by $\tau$ whose fixed points correspond to $\tau$-invariant twistor lines: $M^{\mathbb{C}}$ is a complexification of the quaternionic manifold $M$ (note that quaternionic manifolds are always real-analytic). 
 \begin{defn}\label{inc}
 The incidence space for the complexified twistor correspondence defined is the space
 $$\cF:=\{(z,u)\in Z\times M^{\mathbb{C}}:\ z\in u\}.$$
 \end{defn}
The space $\cF$ is a sphere bundle over $M^{\mathbb{C}}$ and it can be viewed as a complexification of the sphere bundle of quaternionic structures over $M$. Moreover, a complexification of a quaternionic connection $D$ on $M$ induces a distribution on $\cF$ which is the pull back from $Z$ to $\cF\subset Z\times M^{\mathbb{C}}$  of the distribution given by $D$ on $Z$.
 An important fact about complexifications of quaternionic manifolds is that locally the tangent space can be decomposed $TM^{\mathbb{C}}=\cE \otimes_{\mathbb{C}} \cH$, where  $\cE$ and $\cH$ are bundles associated to the standard complex representation of $Sp(n)$ and $Sp(1)$ respectively. Many properties of quaternion-K\"ahler manifolds can be proved by studying these representations. In particular the bundle $S^2\cE\oplus S^2\cH $ is naturally a subbundle of the bundle of $2$-forms but also a subbundle of $End(TM)$ and $S^2\cH $ is spanned by the almost complex structures compatible with the quaternionic structure.

For $n=1$ we define quaternionic manifolds as self-dual conformal $4$-manifolds and quaternion-K\"ahler as self-dual Einstein $4$-manifolds and in this setting the twistor correspondence extends to the dimension $n=1$.
\subsection{The quaternionic Feix--Kaledin construction} \label{Feix}
The quaternionic Feix--Kaledin \cite{BC} (qFK in short) construction is a generalization of the hypercomplex and hyperk\"ahler  twistorial constructions of Feix \cite{Feix}. Using real-analytic geometric data on a complex manifold $S$ it gives a quaternionic structure with a compatible $S^1$-action on a neighbourhood of the zero section of a twisted tangent bundle to $S$  and it has been proved that in this way we can obtain any quaternionic structure with a quaternionic $S^1$-action locally near the fixed point set provided that it is of maximal possible dimension and the action has no triholomorphic points.
In this section we will briefly summarize the construction from \cite{BC} as the main aim of this paper is to characterize when quaternionic manifolds obtained in this way are quaternion-K\"ahler. Note that we will present a simplified version of the construction by replacing a c-projective structure on $S$ by a fixed connection as only this case is relevant in the metric setting (the quaternion-K\"ahler metric restricts to a K\"ahler metric on $S$). We will also outline some ideas from the original approach of Feix which will be relevant here.

Let $S$ be a complex manifold equipped with a torsion-free complex connection $D$ with curvature of type $(1,1)$ (for example the Levi-Civita connection of a K\"ahler metric) and $\cL$ be a holomorphic line bundle with a connection $\nabla_{\cL}$ compatible with the holomorphic structure and with curvature of type $(1,1)$. As we have discussed in Section \ref{comp}, the complexification of the bundle $\cL$ decomposes into the sum of two line bundles $\cL^{1,0}$ and $\cL^{0,1}$ which are trivial along the leaves of the $(0,1)$ and the $(1,0)$-foliations respectively, and equipped with the connections flat along the leaves of the $(1,0)$ and the $(0,1)$-foliation respectively. Also the connection $D^c$ on $TS^{\mathbb{C}}$ is flat along  leaves of the $(1,0)$ and the $(0,1)$-foliations respectively. 
\begin{defn}\label{o}
Let $S$ be a smooth complex manifold of complex dimension $n$. The bundle $\mathcal{O}(1)$ over an open subset of $S$ is defined as a local  $(n+1)$-st root of $\Lambda^nT^{1,0}S$, where $T^{1,0}S$ denote the holomorphic tangent bundle. Note that if $S=\mathbb{CP}^n$ then the bundle $\mathcal{O}(1)$ is defined globally and it is equal to the dual to the tautological bundle.  
\end{defn}
 Let us denote by $\Lp:=\cL^{1,0}\otimes\mathcal{O}(1)$ and $\Lm:=\cL^{0,1}\otimes\mathcal{O}(1)$ and by $\nabla$ the connections on $\Lp$ and $\Lm$ induced by $\nabla_{\cL}$ and $D$. 

%Both in \cite{BC} and \cite{Feix} 
Over each leaf of the foliations one defines $(n+1)$-dimensional complex vector space: in \cite{Feix} this is the vector space of `affine functions' along the leaf, i.e. these which along the leaf satisfy $D^cdf=0$, while in \cite{BC} it is the space of `affine sections' of $\Lp$ and $\Lm$ respectively defined as projections of parallel sections of a flat connection on the $1$-jet bundle of $\Lp$ and $\Lm$ along the leaf. In our simplified case, the sections are tensor products of affine functions with parallel sections for $\nabla$ on $\Lp$ and $\Lm$ respectively. In this way, we define two rank $n+1$ holomorphic bundles over the leave spaces of the foliations: $\Aff(\Lp)$ over $S^{0,1}$ and $\Aff(\Lm)$ over $S^{1,0}$ and we further define $$\Vhp:=[\Aff(\Lp)]^*\otimes \Lm,\ \ \ \ \ \ \ \  \Vam:=[\Aff(\Lm)]^*\otimes \Lp.$$
The  evaluation induces maps $\phi_{1,0}$ from the total space of the bundle $(\Lp)^*\otimes\Lm\rightarrow S^{\mathbb{C}}$ to $\Vhp$ and $\phi_{0,1}$ from $(\Lm)^*\otimes\Lp\rightarrow S^{\mathbb{C}}$ to $\Vam$ which behave like a blow-down: they are biholomorphisms on the complements of the zero sections and on the zero sections they contract leaves of one of the foliations; the images are cone subbundles of $\Vhp$ and $\Vam$. 

The projective bundle $\mathbb{P}([(\cL\otimes\mathcal{O}(1))^*]^{\mathbb{C}})=\mathbb{P}([\Lp]^*\oplus[\Lm]^*)$ has canonical $0$ and $\infty$ sections and the corresponding affine bundles (obtained by removing one of the sections) are  $(\Lp)^*\otimes\Lm$ and  $(\Lm)^*\otimes\Lp$. This defines a gluing of $\Vhp$ and $\Vam$ to a possibly non-Hausdorff complex manifold. However, we can choose open tubular neighbourhoods of the zero sections of $\Vhp$ and $\Vam$ containing the images of $\phi_{1,0}$ and $\phi_{0,1}$ such that the gluing is a Hausdorff manifold $Z$. 

It turns out that $Z$ satisfies the properties of quaternionic twistor space: the real structure is induced by the real structure $\tau$ given by the complexification and the images of fibres of $\mathbb{P}([(\cL\otimes\mathcal{O}(1))^*]^{\mathbb{C}})$ are twistor lines. The twistor lines obtained in this way are the twistor lines corresponding to $S^{\mathbb{C}}\subset M^{\mathbb{C}}$, where $M$ is the quaternionic manifold obtained from $Z$. To simplify the notation we make the following definition.
\begin{defn}\label{canonical}
The twistor lines which are images of fibres of $\mathbb{P}([(\cL\otimes\mathcal{O}(1))^*]^{\mathbb{C}})$ are called canonical twistor lines and the fibres over the real submanifold $S\subset S^{\mathbb{C}}$ are called canonical real twistor lines.  The canonical twistor lines correspond to points in a neighbourhood of $S$ in the submanifold $S^{\mathbb{C}}\subset M^{\mathbb{C}}$.
\end{defn}
 The construction can be summarized by the following diagram.
\begin{equation} \label{dig1}
\xymatrix@R=1.1cm@C=.9cm{
& \Lp^*\tp\Lm\ar@{^{(}->}[dl] \ar[r]^-{\phi_{1,0}} & \Vhp \ar@{~>}[dr] &\\
\mathbb{P}(\Lp^{*}\ds\Lm^{*})\ar@{.>}[rrr]^{\phi}&&&Z\\
&\Lp\tp\Lm^*\ar@{_{(}->}[ul] \ar[r]^-{\phi_{0,1}}&\Vam\ar@{~>}[ur]&
}
\end{equation}

The scalar multiplication (and its inverse) in the fibres induces a holomorphic $\tau$-invariant $\mathbb{C}^*$-action on $Z$ fixing the zero sections of $\Vhp$ and $\Vam$, tangent to canonical twistor lines and transversal to twistor lines near the canonical real twistor lines. Therefore, it corresponds to a quaternionic $S^1$-action on $M$ such that $S\subset M$ is the fixed points set.
\begin{defn}
A fixed point of a quaternionic $S^1$-action on a quaternionic manifold is called triholomorphic if the induced action on the twistor space fixes all points on the corresponding twistor line.
\end{defn}
The points on the fixed point set $S\subset M$ of the action obtained by qFK are not triholomorphic.  In \cite{BC} it is shown that the twistor space of any quaternionic manifold with such an $S^1$-action can be obtained locally in this way: the fixed point set corresponds on the twistor space $Z$ to a submanifold of complex dimension $n$ with two components on which the corresponding $\mathbb{C}^*$-action has constant single weight equal to $1$ and one can deduce that $Z$ is of the form as in the quaternionic Feix--Kaledin construction.

Note that if $M$ is a quaternion-K\"ahler manifold, then any isometry is also a symmetry of quaternionic structure \cite{Alexsym} Section 5 (but the converse is not true). Hence we will use terms $S^1$-symmetry and $S^1$-isometry in this setting interchangeably.

\subsection{Quaternion-K\"ahler moment map} \label{moment}
For any $S^1$-symmetry (i.e., isometry) on a quaternion-K\"ahler manifold $M$ one can define its moment map (\cite{Gal})  in the following way. Let $\omega_i(\cdot,\cdot):=g(I_i\cdot,\cdot)$, where $I_1,I_2,I_3$ is a local quaternionic base. Then the $4$-form $\Omega:=(\omega_1)^2+ (\omega_2)^2+(\omega_3)^2$ is defined globally, is non-degenerate and does not depend on the choice of the local quaternionic bases. The moment map of the action is a section of $S^2\cH$ (where $TM^{\mathbb{C}}=\cE\otimes_{\mathbb{C}} \cH$ and  $S^2\cH$ is a subbundle of $2$-forms, see Section \ref{twist}) satisfying 
$$\d \mu=i_X\Omega,$$
where $X$ is the Killing field of the action.
The moment map is unique and on the twistor space it corresponds to the section $\theta(\hat{X})$ (see \cite{Hit2} Section 4.3), where $\hat{X}$ is the vector field on $Z$ corresponding to $X$ and $\theta$ is the contact form. In \cite{Bat} Proposition 3.3, it is shown that away from $\mu^{-1}(0)$, the moment map induces a distinguished (up to a sign) integrable complex structure $\pm I$ invariant under the $S^1$-action. From the twistorial point of view, the complex structure corresponds to one of the two components of the zero set of the twistorial moment map $\theta(\hat{X})$: along real twistor lines the bundle $L$ (in which $\theta$ takes the values) is the bundle $\mathcal{O}(2)$, hence $\theta(\hat{X})$ along  any real  twistor line either vanishes or has exactly two zeros. The first possibility is excluded on the complement of $\mu^{-1}(0)$, hence locally we have two components of the zero set of $\theta(\hat{X})$. 

If $J$ is any almost complex structure from the quaternionic structure anti-commuting with $I$ (possibly defined only locally) then we have that $\mathcal{L}_XJ=K$, where $K=IJ$. 
%Note that as $S^2\cH$ is also spanned by the almost complex structures from the quaternionic structure
 \section{The necessary condition for the existence of a quaternion-K\"ahler metric}\label{nece}
In the literature one can find two different definitions of  totally complex submanifolds of quaternion-K\"ahler manifolds. One of them is connected purely with the underlying quaternionic structure (see \cite{BC}), while the other gives also a condition for the metric (see \cite{Alex2}). Recall that $\cQ$ is the $2$-sphere subbundle  of $Q$  (which is the bundle of almost complex structures forming the quaternionic structure). 
\begin{defn}
 A submanifold $S$ of $(M,Q)$ is totally complex if there exists
 a section $I$ of $\cQ|_S$ (in particular $I^2=-\id$) such that the following two conditions are satisfied:
\begin{itemize}
\item[(i)] $I(TS)\sub TS$ (so that $I$ is an almost complex structure on $S$);
\item[(ii)] for all $J$ anti-commuting with $I$, $J(TS)\cap TS = 0$.
\end{itemize}

 A submanifold $S$ of $(M,Q)$ is metrically totally complex if it satisfies the  conditions $(i), (ii)$ and the following condition $(iii)$:
\begin{itemize}
\item[(iii)] $J(TS)$ is $g$-orthogonal to $TS$ for any $J$ anti-commuting with $I$.
\end{itemize}

\end{defn}
Note that as it is shown in \cite{Alex1} Theorem 5.2, the above conditions imply that $J$ is integrable hence $S$ is a complex submanifold of $M$.

Let $Z$ be a twistor space of a quaternion-K\"ahler manifold $M$ with an $S^1$-action which is an isometry, has no triholomorphic points and its fixed point set component $S$ is of the maximal possible dimension (i.e. of complex dimension $n$).  Then locally near $S$, the manifold $M$ can be obtained by the quaternionic Feix--Kaledin construction (see Section \ref{Feix}) and the moment map has no zeros as the  action is rotating the twistor lines corresponding to $S$. We restrict $(Z,M,S)$ such that they globally arise by the construction. As described is Section \ref{Feix}, the twistor space is in this case a gluing of tubular neighbourhoods of zero sections of two holomorphic rank $(n+1)$ vector bundles $\Vhp$ and $\Vam$ and the zero sections of the bundles correspond to $(S,I)$ and $(S,-I)$, where $I$ is the complex structure on $S$ fixed by the action. The bundle structure on  $\Vhp$ and $\Vam$ is given by the holomorphic $\mathbb{C}^{\mult}$-action on $Z$ corresponding to the $S^1$-action. 
 %and recall that by $\nabla$  we denote the  tensor product connection on $\cL\tp\mathcal{O}(1)$.
 As $(M,g)$ is quaternion-K\"ahler, the  distribution induced by the Levi-Civita connection on $Z$ is holomorphic and contact and we denote by $D^g$ the quaternionic-K\"ahler connection on $M$ and by $\theta$ the contact form. We abuse the notation and denote the holomorphic contact distribution on $Z$ (i.e., the kernel of $\theta$) also by $D^g$.

As explained in Section \ref{moment}, any quaternion-K\"ahler manifold with a compatible circle action on the complement of the zero set of the moment map admits a distinguished (up to a sign) $S^1$-invariant compatible integrable complex structure $I$, which is defined by the zero set of the twistorial moment map. In  our case this complex structure is an extension of the complex structure on $S$ and it follows from \cite{Bat} Proposition 3.5 that $S$ (as contained the fixed point set) is K\"ahler and hence by  \cite{Alex2} Theorem 1.12 metrically totally complex. The complex structure $I$ on $Z$  is a holomorphic smooth divisor $\cD^{1,0}$ which contains the zero section of the bundle $\Vhp$. 
\begin{lemma}\label{div}
Let $M$ be a quaternion-K\"ahler manifold obtained by qFK construction such that the natural $S^1$-action is an isometry. Then the divisor  $\cD^{1,0}$ on $Z$ corresponding to the distinguished (up to a sign) $S^1$-invariant complex structure $I$ defined by the moment map is a rank $n$ subbundle of the vector bundle $\Vhp$.
\end{lemma}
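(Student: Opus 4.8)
The plan is to identify $\cD^{1,0}$ with the zero locus, inside the chart $\Vhp$, of the twistorial moment map $\theta(\hat X)$, and then to show that this section of the contact line bundle $L$ is fibrewise a nonvanishing \emph{linear} form; its kernel is then precisely a holomorphic rank $n$ subbundle of $\Vhp$. As recalled in Section \ref{moment}, $\theta(\hat X)$ is a holomorphic section of $L$ whose zeros along each real twistor line trace out the two divisors $\cD^{1,0}\sub\Vhp$ and $\cD^{0,1}\sub\Vam$; the component $\cD^{1,0}$ is the one defining the distinguished complex structure $I$, and it is already known to be a \emph{smooth} divisor containing the zero section of $\Vhp$.

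First I would fix local coordinates $(\zeta,w)$ on $\Vhp$, with $\zeta=(\zeta^1,\dots,\zeta^n)$ on the base $S^{0,1}$ and $w=(w^0,\dots,w^n)$ linear along the fibres. Since the bundle structure of $\Vhp$ is by definition induced by the holomorphic $\C^{\mult}$-action, which acts on the fibres with constant weight $1$, the generator $\hat X$ is the Euler field $\hat X=\sum_{a=0}^n w^a\,\partial_{w^a}$, vanishing on the zero section. Because the $S^1$-action is an isometry it preserves the whole quaternion-K\"ahler structure, hence the contact structure; thus $\theta$ is invariant under $\C^{\mult}$ for the canonical lift of the action to $L$, and $\hat X$ is invariant as the generator. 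Consequently $\theta(\hat X)$ is a $\C^{\mult}$-invariant holomorphic section of $L$.

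The key step is to convert this invariance into fibrewise linearity. Trivialising $L$ by a local frame $\ell$ and writing $\theta=\alpha\tp\ell$ with $\alpha=\sum_a f_a\,dw^a+\sum_i g_i\,d\zeta^i$, one has $\theta(\hat X)=(\sum_a f_a w^a)\,\ell$, and invariance means that on each fibre $\sum_a f_a w^a$ is homogeneous of a fixed degree $d$ in $w$. Since $\theta(\hat X)$ is holomorphic and vanishes along the zero section $\cD^{1,0}\supset\{w=0\}$, we have $d\geq 1$; and a $\C^{\mult}$-invariant divisor meets each fibre in a homogeneous cone, which for $d\geq 2$ is singular at its vertex $w=0\in\cD^{1,0}$, contradicting smoothness. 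Hence $d=1$ and $\theta(\hat X)=(\sum_a f_a(\zeta)w^a)\,\ell$ with $f_a$ depending only on $\zeta$. (As a consistency check, the canonical lift to $L$, the $(n{+}1)$-st root of the anticanonical bundle $K_Z^{-1}$, scales the frame with weight $-1$, because $dw^0\wedge\cdots\wedge dw^n\wedge d\zeta^1\wedge\cdots\wedge d\zeta^n$ scales with weight $n+1$; invariance of $\theta$ then forces the $dw^a$-coefficients $f_a$ to have weight $0$, i.e.\ to be functions of $\zeta$ alone.)

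Finally, smoothness of $\cD^{1,0}=\{\sum_a f_a(\zeta)w^a=0\}$ at a point of the zero section forces $d(\sum_a f_a w^a)|_{w=0}=\sum_a f_a(\zeta)\,dw^a\neq 0$, so $(f_a(\zeta))_a$ is nowhere zero. Therefore $\zeta\mapsto\ker\bigl(\sum_a f_a(\zeta)w^a\bigr)$ is a holomorphic family of hyperplanes in the fibres of $\Vhp$, that is, a rank $n$ holomorphic subbundle, and it coincides with $\cD^{1,0}$. I expect the main obstacle to be the middle step: making precise that the canonical lift of the $\C^{\mult}$-action to $L$ is exactly the one rendering $\theta$ invariant, and extracting fibrewise linearity from this; the smoothness argument is what rules out higher-degree (cone) behaviour and simultaneously supplies the nonvanishing needed for the subbundle structure.
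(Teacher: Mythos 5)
Your proof is correct and follows essentially the same route as the paper's: both rest on the observation that $\cD^{1,0}$ is invariant under the fibrewise $\C^{\mult}$-action on $\Vhp$ and that a smooth scaling-invariant divisor containing the zero section must be fibrewise linear, hence a rank $n$ subbundle. The difference is one of presentation rather than substance: the paper infers the invariance directly from the uniqueness (up to sign) of the $S^1$-invariant complex structure $I$ and leaves the ``smooth invariant cone is linear'' step implicit, whereas you derive the invariance from equivariance of $\theta(\hat X)$ under the canonical lift of the action to $L$ and make the linearity step explicit via the homogeneity-degree and vertex-singularity argument in coordinates.
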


\begin{proof}
Recall that the $S^1$-action on $M$ gives a $\mathbb{C}^{\mult}$-action on $Z$ which on $\Vhp$  is the scalar multiplication in the fibres. The divisor $\cD^{1,0}$ represent the unique (up to a sign) complex structure $I$ which is $S^1$-invariant, therefore $\cD^{1,0}$ is $\mathbb{C}^{\mult}$-invariant. Hence, as $\cD^{1,0}$ is smooth, it is a holomorphic rank $n$ subbundle of $\Vhp$ transversal to canonical real twistor lines (see Definition \ref{canonical}). 
\end{proof}
As a result we have that  $(M,I)\cong  \cD^{1,0}$ and this isomorphism is $S^1$-eqivariant. 

 Since $Z$ is a holomorphic contact $(2n+1)$-manifold, through any point of $Z$ locally there are Legendrian $n$-dimensional holomorphic submanifolds tangent to the contact distribution. Let  $u_{x,\overline{x}}$ be the real twistor line through the zero of $\cD^{1,0}_{\overline{x}}$ for $\overline{x}\in S^{0,1}$. The twistor line $u_{x,\overline{x}}$ passes through the zero section of $\Vam$ which we denote by $\underline{\infty}$ and $u_{x,\overline{x}}\setminus{\underline{\infty}}\subset\Vhp_{\overline{x}}$ (as this is canonical real twistor line - see Section \ref{Feix}).

\begin{lemma}\label{aff}
For any $\overline{x}\in S^{0,1}$ and any $z\in u_{x,\overline{x}}\setminus{\underline{\infty}}$, there exists a unique Legendrian submanifold contained in $\Vhp_{\overline{x}}$. Moreover the Legendrian submanifolds with this property are affine hyperplanes. 
\end{lemma}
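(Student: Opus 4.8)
The plan is to work entirely inside a single fibre $V:=\Vhp_{\overline{x}}$, an $(n+1)$-dimensional complex vector space on which the $\mathbb{C}\mult$-action is scalar multiplication with Euler (generating) field $E=\hat X|_V$; the canonical twistor lines meeting $V$ are the lines through the origin, and in particular $u_{x,\overline{x}}\setminus\infs$ is one such radial line. Since $V\cong\mathbb{C}^{n+1}$ is Stein, $L|_V$ is holomorphically trivial, so I would write $\theta|_V=\vartheta\tp s$ for a holomorphic frame $s$ of $L|_V$ and a holomorphic scalar $1$-form $\vartheta$ on $V$, and set $\mu_V:=\mu/s=i_E\vartheta$, where $\mu=\theta(\hat X)$ is the twistorial moment map. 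Because a Legendrian submanifold is $n$-dimensional and tangent to $\ker\theta$, any Legendrian $\ell\sub V$ satisfies $T\ell\sub\ker\theta\cap TV=\ker\vartheta$; once $\vartheta$ is shown to be nowhere vanishing this forces $T\ell=\ker\vartheta$, so $\ell$ is an integral leaf of a corank-one distribution and is unique through each point. Thus the whole statement reduces to computing $\vartheta$.

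The first step is to show that $\mu_V$ is linear. Restricted to the radial twistor line $u_{x,\overline{x}}$, the form $\mu$ is a section of $L|_{u_{x,\overline{x}}}\cong\mathcal{O}(2)$ which (Section \ref{moment}) has exactly two zeros, one at the origin and one at $\infs$; both are therefore simple. Hence $\mu_V$, a holomorphic function on $\mathbb{C}^{n+1}$, is homogeneous of degree $1$ with respect to $E$, and so equals $\langle\alpha,\cdot\rangle$ for a nonzero covector $\alpha\in V^*$; its zero locus is precisely the linear hyperplane $\cD^{1,0}_{\overline{x}}$ of Lemma \ref{div}.

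The crux is to promote this to $\vartheta=\d\mu_V$. Since the isometric $S^1$- (hence $\mathbb{C}\mult$-) action preserves the holomorphic contact structure, it rescales $\theta$ by a character, so $\mathcal{L}_E\theta=c\,\theta$ for a constant $c$; choosing $s$ equivariant gives $\mathcal{L}_E\vartheta=c'\vartheta$ for a constant $c'$, and applying $i_E$ together with $i_E\vartheta=\mu_V$ of degree $1$ forces $c'=1$. Now consider $\beta:=\vartheta-\d\mu_V$. It is a holomorphic $1$-form on all of $V$, homogeneous of degree $1$, with $i_E\beta=\mu_V-\mu_V=0$. Writing $\beta=\sum_i b_i\,\d z_i$ in linear coordinates, the $b_i$ are holomorphic and homogeneous of degree $0$ on $\mathbb{C}^{n+1}$, hence constant; but then $i_E\beta=\sum_i b_i z_i\equiv 0$ forces every $b_i=0$. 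Therefore $\beta=0$ and $\vartheta=\d\mu_V=\alpha$ is a nonzero constant covector.

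It then remains only to read off the conclusion. The distribution $\ker\vartheta=\ker\alpha$ is a parallel (hence integrable) hyperplane field whose integral manifolds are exactly the parallel affine hyperplanes $\{\langle\alpha,\cdot\rangle=\const\}$; each is $n$-dimensional, holomorphic and tangent to $\ker\theta$, hence Legendrian, and by the first paragraph these are the only Legendrian submanifolds contained in $V$. Through $z\in u_{x,\overline{x}}\setminus\infs$ there is precisely one, namely $\{\langle\alpha,\cdot\rangle=\mu_V(z)\}$, which is an affine hyperplane (not through the origin once $z\neq\zers$, since then $\mu_V(z)\neq 0$). I expect the main obstacle to be the crux step: the argument hinges on the rigidity that a holomorphic object homogeneous of nonpositive degree on the whole of $\mathbb{C}^{n+1}$ must be constant, which is exactly what converts the mere homogeneity coming from the symmetry, together with the fact that $\theta$ is a genuine global holomorphic contact form extending across the zero section (where $E$ degenerates), into the strong statement that $\vartheta$ is literally constant on the fibre.
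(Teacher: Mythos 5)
Your proof is correct in substance, but it follows a genuinely different route from the paper's. The paper splits the lemma into existence and affineness: existence of the Legendrians through points of $u_{x,\overline{x}}$ comes from flatness of the complexified Levi-Civita connection along the leaf $S^{1,0}_{\overline{x}}$ (a consequence of its type $(1,1)$ curvature along $S$) — the Legendrians are images of parallel sections over the cone of canonical twistor lines, via the incidence space — while affineness comes from $\mathbb{C}^{\mult}$-invariance of this family: the tangent spaces $W_z=\ker\theta_z\cap T\Vhp_{\overline{x}}$ are constant along each radial orbit, hence, letting $z\to\zers$, equal at every point to the single subspace $W=\ker\theta_{\zers}\cap T\Vhp_{\overline{x}}$. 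You never leave the fibre and never invoke parallel sections: you trivialize the contact line bundle over the fibre, prove the resulting scalar form $\vartheta$ equals $\d\mu_V$ for the fibrewise moment map $\mu_V$, and show $\mu_V$ is a nonzero linear functional, so existence, uniqueness and affineness all drop out of one explicit formula — the Legendrians are the level sets of $\mu_V$. This buys something the paper does not state: it identifies the foliation of Remark \ref{foli} with the level sets of the twistorial moment map, and it recovers Proposition \ref{mtc} (the zero level set $\cD^{1,0}_{\overline{x}}=\ker\vartheta$ is Legendrian) at no extra cost; conversely, the paper's argument makes visible where the quaternion-K\"ahler curvature hypothesis enters, which your homogeneity rigidity bypasses entirely.

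One step, however, must be reordered to be correct. As written, ``the two zeros of $\mu|_{u_{x,\overline{x}}}$ are simple, hence $\mu_V$ is homogeneous of degree $1$'' is a non sequitur: the simple zero at $\infs$ cannot be read off in your frame $s$, which exists only on the fibre and need not extend across $\infs$ in any controlled way; and vanishing data along the single line $u_{x,\overline{x}}$ says nothing about $\mu_V$ on the other radial lines without equivariance. The repair uses only ingredients you already have, in the opposite order: your equivariant-frame step gives $\mathcal{L}_E\vartheta=c'\vartheta$, hence $E(\mu_V)=c'\mu_V$ on the whole fibre; since $\mu_V$ is holomorphic at $\zers$, this forces $c'$ to be a non-negative integer and $\mu_V$ to be a homogeneous polynomial of degree $c'$; then the simple zero of $\mu|_{u_{x,\overline{x}}}$ at $\zers$ — which \emph{is} visible in your trivialization — pins $c'=1$, and linearity of $\mu_V$ follows. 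With that permutation every remaining step is sound; note also that your rigidity arguments only require a connected $\mathbb{C}^{\mult}$-invariant open neighbourhood of $\zers$ in the fibre, which is what $Z\cap\Vhp_{\overline{x}}$ is, so nothing is lost even if one is careful about whether $Z$ contains the full fibre.
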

\begin{proof}
Inside the fibre $\Vhp_{\overline{x}}$ there is a cone $\mathcal{C}_{\overline{x}}$ given by the union of the canonical twistor lines  which are images of fibres of the Lagrangian submanifold $S^{1,0}_{\overline{x}}\subset S^{\mathbb{C}}$ (see Definition \ref{canonical}). In other words, $S^{1,0}_{\overline{x}}$ is the leaf  of the $(1,0)$-foliation through $\overline{x}$. As the curvature of the connection $D^g$ is of type $(1,1)$ on $S\subset M$, the complexified connection is flat along $S^{0,1}_x$. Hence it is flat on the preimage  of $\mathcal{C}_{\overline{x}}$ in the incident space $\cF$ (see Definition \ref{inc}) and  the Legendrians $\cW_z$ for $z\in  u_{x,\overline{x}}\setminus\{\underline{\infty},\underline{0}\}$ are the images of the parallel sections (the parallel section corresponding to $z\in\underline{0}$ is contracted to a point).	

Now consider the limit of the tangent spaces $W_z$ to $\cW_z$  along $ u_{x,\overline{x}}$ as $z\rightarrow 0\in \Vhp_{\overline{x}}$. We have that $W_z$ are Legendrian subspaces equal to $\ker (\theta)_x\cap\Vhp_{\overline{x}}$, hence  the limit is a well defined Legendrian subspace $W$ equal to $\ker (\theta)_0\cap\Vhp_{\overline{x}}$ - the dimension of $W$ is equal to $n$ as along  $ u_{x,\overline{x}}$ the space $\Vhp_{\overline{x}}$ is transversal to the contact distribution. 

	As the contact distribution on $Z$ is $\mathbb{C}^{\mult}$-invariant and the action preserves the fibres, we deduce, that the family of submanifolds $\cW_z$ for $z\in  u_{x,\overline{x}}\setminus\{\underline{\infty},\underline{0}\}$ is $\mathbb{C}^{\mult}$ invariant.
	  Hence the spaces  $W_z$ tangent to the Legendrians in $\Vhp_{\overline{x}}$ are constant along the orbit of the $\mathbb{C}^{\mult}$-action (note that $\Vhp_{\overline{x}}$ is a vector space so the tangent bundle is trivial) and equal to the limit as $z\rightarrow 0\in \Vhp_{\overline{x}}$ which is $W$. Hence the  Legendrians $\cW_z$ for  $z\in  u_{x,\overline{x}}\setminus\{\underline{\infty},\underline{0}\}$ are affine hyperplanes modeled on the vector space $W$. As the vector subspace $W$ of $\Vhp_{\overline{x}}$  is the limit of Legendrian submanifolds $\cW_z$ as $z\rightarrow 0\in \Vhp_{\overline{x}}$,  it is also Legendrian submanifold.
	
\end{proof}

\begin{remark}\label{foli}
Note that in the above Lemma we have shown that for any  $\overline{x}\in S^{0,1}$ the space $Z\cap\Vhp_{\overline{x}}$ is foliated by the Legendrians.
Moreover as rewriting $\ker (\theta)=D^g$ we have that $D^g\cap \Vhp_{\overline{x}}|_0=W$.

 We can obtain an analogous result for the space $Z\cap\Vam_x$, where $x\in S^{1,0}$.
\end{remark}

In Lemma \ref{div} we have shown that $D^g$ on $Z$ is tangent to $I|_S$ and clearly it cannot be tangent to the whole $\cD^{1,0}$ (as it is contact). However the following is true.
\begin{prop}\label{mtc}
For any $\overline{x}\in S^{0,1}$, the fibre $(\cD^{1,0}_{\overline{x}},I)$ is tangent to the contact distribution $D^g$, hence it is metrically totally complex submanifold, and in particular K\"ahler.
\end{prop}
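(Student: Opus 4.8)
The plan is to identify the fibre $\cD^{1,0}_{\overline{x}}$ with the vertical Legendrian subspace $W$ produced in Lemma~\ref{aff}, and then to read off the metric statement from the twistorial dictionary between Legendrian submanifolds of $Z$ and (metrically) totally complex submanifolds of $M$. Under the isomorphism $(M,I)\cong\cD^{1,0}$ the fibre $\cD^{1,0}_{\overline{x}}$ corresponds to a complex $n$-dimensional submanifold $N_{\overline{x}}$ of $(M,I)$ whose twistor lift (the image of the section $m\mapsto I_m$) is exactly $\cD^{1,0}_{\overline{x}}\subset Z$. Since $N_{\overline{x}}$ has half the real dimension of $M$, once I know that this lift is tangent to the contact distribution $D^g$ it will be a maximal Legendrian, and the characterisation of metrically totally complex submanifolds supplies conditions (i)--(iii) and hence the K\"ahler property. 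So everything reduces to proving that $\cD^{1,0}_{\overline{x}}$ is tangent to $D^g$, i.e. that $\cD^{1,0}_{\overline{x}}=W$.

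First I would record two facts living in the single vector space $\Vhp_{\overline{x}}$. By Lemma~\ref{div} the fibre $\cD^{1,0}_{\overline{x}}$ is an $n$-dimensional linear subspace through the origin, and by Lemma~\ref{aff} together with Remark~\ref{foli} the slice $Z\cap\Vhp_{\overline{x}}$ is foliated by affine hyperplanes modelled on $W$; since each such leaf is Legendrian with constant tangent space $W$, and $\Vhp_{\overline{x}}$ is transversal to the contact distribution along $u_{x,\overline{x}}$, a dimension count gives $(D^g)_v\cap\Vhp_{\overline{x}}=W$ for every $v$ in a neighbourhood of the origin. The second ingredient is the moment map: as recalled in Section~\ref{moment}, $\cD^{1,0}$ is a component of the zero set of $\theta(\hat{X})$, so $\theta(\hat{X})$ vanishes identically on $\cD^{1,0}$.

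Now the key step. The $\C\mult$-action on $\Vhp$ is scalar multiplication in the fibres, so at a point $v\in\Vhp_{\overline{x}}$ the field $\hat{X}$ is the (vertical) radial vector proportional to $v$. For $v\in\cD^{1,0}_{\overline{x}}$ the relation $\theta(\hat{X})_v=0$ therefore reads $\theta_v(v)=0$, i.e. $v\in\ker\theta_v=(D^g)_v$; being vertical, $v$ lies in $(D^g)_v\cap\Vhp_{\overline{x}}=W$. This yields $v\in W$ for all $v\in\cD^{1,0}_{\overline{x}}$ near the origin, whence $\cD^{1,0}_{\overline{x}}\subseteq W$ by linearity, and equality follows because both are $n$-dimensional. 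Hence $\cD^{1,0}_{\overline{x}}=W$ is a Legendrian submanifold, tangent to $D^g$. Invoking the twistorial characterisation of metrically totally complex submanifolds then finishes the proof, the K\"ahler conclusion being the standard fact that a metrically totally complex submanifold of a quaternion-K\"ahler manifold is K\"ahler (cf. \cite{Alex2}).

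The step I expect to be most delicate is the last one: translating the purely holomorphic-contact statement that $\cD^{1,0}_{\overline{x}}$ is tangent to $D^g$ into the metric conditions (ii) and especially the orthogonality (iii) in the definition of metrically totally complex. This requires the precise relationship between the contact form $\theta$ and the quaternion-K\"ahler metric $g$ along real twistor lines, rather than just the underlying quaternionic structure; by contrast, the identification $\cD^{1,0}_{\overline{x}}=W$ is elementary once Lemmas~\ref{div} and~\ref{aff} are in hand.
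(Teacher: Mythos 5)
Your proposal is correct and follows essentially the same route as the paper: both proofs reduce the proposition to the identity $\cD^{1,0}_{\overline{x}}=W$, exploit that the generator $\hat{X}$ of the $\mathbb{C}^{\mult}$-action is the radial field in the fibres, use the characterisation of $\cD^{1,0}$ near the origin as a component of the zero set of the twistorial moment map $\theta(\hat{X})$, and conclude by equality of dimensions of the two linear subspaces. The only (minor) difference is the direction of the inclusion: the paper proves $W\subseteq\cD^{1,0}_{\overline{x}}$ by noting that along any vector line $l\subset W$ the field $\hat{X}$ is tangent to the Legendrian $W$, so $\theta(\hat{X})$ vanishes on $l$; you prove $\cD^{1,0}_{\overline{x}}\subseteq W$, which instead leans on the pointwise identification $(D^g)_v\cap T_v\Vhp_{\overline{x}}=W$ at points near the origin (the foliation by Legendrian affine hyperplanes from Lemma~\ref{aff} together with transversality), after which the two arguments coincide.
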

\begin{proof}
We aim to show that  $\cD^{1,0}_{\overline{x}}=W$. Both submanifolds are vector spaces of the same dimension.
On the twistor space $Z$ the submanifold $\cD^{1,0}$ is defined as one of the two components of the zero set of the twistorial moment map. The moment map is equal to $\theta (\hat{X})$, where $\hat{X}$ is a generator of the $\mathbb{C}^{\mult}$-action on $Z$. Hence, on $\cD^{1,0}$ we have that $\hat{X}\in \ker \theta$ belongs to $D^g$ and this condition characterises $\cD^{1,0}$ in some neighbourhood of $0$.   Let $l\subset W$ be a vector line. Then for any point $w\in l\setminus\{0\} $, the vector $\hat{X}_w$ is tangent to $l$ (as this is the generator of the scalar multiplication). Hence $\hat{X}_w\in\ker(\theta_w)$ as it is tangent to a Legendrian submanifold. Therefore, $l\setminus\{0\}$  (and hence $l$) is contained in $\cD^{1,0}_{\overline{x}}$ which completes the proof.

%As $\hat{X}$ is a generator of the scalar multiplication on the vector space $\cD^{1,0}_{\overline{x}}$, for any holomorphic vector line $l\subset \cD^{1,0}_{\overline{x}}$ we have that $l\in\ker (\theta)$ (as on $ l\subset \cD^{1,0}_{\overline{x}}$ the generator $\hat{X}\in\ker(\theta)$). Therefore $l$  is tangent to $D^g$. In particular the distribution $D^g$ at $0$ of $\Vhp$ is tangent to $\cD^{1,0}_{\overline{x}}$. Hence by Remark \label{foli}, we conclude that the submanifold $\cD^{1,0}_{\overline{x}}$=W is Legendrian.
% Moreover, as   $\cD^{1,0}$  is $\mathbb{C}^*$-invariant (as $I$ is $S^1$), we have that $\hat{X}$ is tangent to $\cD^{1,0}$.
\end{proof}

\begin{thm}\label{necessary}
Let $M$ be a  quaternion-K\"ahler $4n$-manifold with an $S^1$-isometry which has a component of the fixed point set $S$ of dimension $2n$ and such that the action on $S$ has no triholomorphic points.  Then locally near $S$, the twistor space $Z$ of $M$ can be obtained by the quaternionic Feix--Kaledin construction, the submanifold used in the construction is K\"ahler and the connection on the line bundle is unitary and has curvature proportional to the K\"ahler form.
\end{thm}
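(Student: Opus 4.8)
The plan is to assemble the structural results already established and then pin down the line-bundle data from the contact geometry. The first two assertions are essentially in hand. By the classification of \cite{BC} recalled before Lemma \ref{div}, near $S$ the twistor space $Z$ arises from the quaternionic Feix--Kaledin construction with some data $(S,D,\cL,\nabla_{\cL})$, where $D$ is torsion-free with curvature of type $(1,1)$ and $\nabla_{\cL}$ has curvature of type $(1,1)$. Since the action rotates the twistor lines through $S$, the moment map is nowhere zero along them, so the distinguished complex structure $I$ extends the complex structure on $S$; by \cite{Bat} Proposition 3.5 and \cite{Alex2} Theorem 1.12 (as already used in Proposition \ref{mtc}) $S$ is K\"ahler and metrically totally complex. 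Since $S$ is moreover a component of the fixed-point set of an isometry it is totally geodesic, so the connection $D$ underlying the construction coincides with the Levi--Civita connection of the induced K\"ahler metric $g_S$, with K\"ahler form $\omega_S$.

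Next I would recover $\cL$ from the contact line bundle $L$ in which $\theta$ takes values. Along each canonical twistor line $u_{x,\overline{x}}=\mathbb{P}(\Lp^*\ds\Lm^*)_{(x,\overline{x})}$ one has $L\cong\mathcal{O}(2)$, and since $L$ is the vertical bundle of the twistor fibration it restricts to the relative tangent bundle of this $\mathbb{P}^1$. Evaluating at the point $\zers\in S^{1,0}$ lying on the zero section of $\Vhp$ identifies the fibre of $L$ with $T_{\zers}\mathbb{P}(\Lp^*\ds\Lm^*)=\Lp\tp\Lm^*$; as the two $\mathcal{O}(1)$ factors cancel, this upgrades to the identification $L|_{S^{1,0}}\cong\cL^{1,0}\tp(\cL^{0,1})^*$, which on the real submanifold reads $\cL\tp\overline{\cL}^{-1}$.

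Unitarity of $\nabla_{\cL}$ should then follow from reality. The real structure $\tau$ interchanges $S^{1,0}$ with $S^{0,1}$ and $\Vhp$ with $\Vam$, and the $\tau$-invariance of the contact distribution (condition (iii) of Section \ref{twist}) makes $\theta$, hence the identification of the previous step, $\tau$-real. Tracing this through yields a conjugate-linear isomorphism $\cL^{1,0}\cong\overline{\cL^{0,1}}$ intertwining the induced connections, that is, a Hermitian metric on $\cL$ preserved by $\nabla_{\cL}$. Thus $\nabla_{\cL}$ is unitary and $\overline{\cL}\cong\cL^{-1}$, so that the identification above becomes $L|_{S^{1,0}}\cong\cL^{\tp 2}$, as asserted in the introduction.

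The remaining and hardest point is that the curvature of $\nabla_{\cL}$ is proportional to $\omega_S$; this is precisely where the Einstein, rather than merely quaternionic, hypothesis enters. Because $M$ is quaternion-K\"ahler, the complexified twistor space carries a K\"ahler--Einstein structure whose Ricci form is a constant multiple of its K\"ahler form $\omega_Z$, and $L$ is the $(n+1)$-st root of the anticanonical bundle; hence the curvature of $L$ is proportional to $\omega_Z$, with factor fixed by the reduced scalar curvature of $M$. Restricting to the Legendrian $S^{1,0}\cong(S,g_S)$ and using metric total complexity to compare $\omega_Z|_{S^{1,0}}$ with $\omega_S$, the curvature of $L|_{S^{1,0}}\cong\cL^{\tp 2}$ is proportional to $\omega_S$, and halving gives the claim for $\nabla_{\cL}$. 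I expect the genuine work to lie in making the comparison $\omega_Z|_{S^{1,0}}\propto\omega_S$ precise and in controlling the proportionality constant; rather than rely on the abstract K\"ahler--Einstein picture I would carry this out concretely, computing $\theta$ and $\d\theta$ fibrewise on $\Vhp$ with the aid of Lemma~\ref{aff}, the weight-one $\mathbb{C}^{\mult}$-action, and the affine-hyperplane description of the Legendrians from Proposition \ref{mtc}.
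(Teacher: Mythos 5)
Your first paragraph is sound and matches the paper: the reduction to the qFK picture via \cite{BC}, and the K\"ahlerness and metric total complexity of $S$ via \cite{Bat} and \cite{Alex2}, are exactly how the paper's proof begins. The genuine gap is your unitarity step. You claim that $\tau$-invariance of the contact distribution ``traces through'' to a parallel Hermitian structure on $\cL$; this cannot work, because the real structure, and the $\tau$-invariance of all the relevant distributions, are present in the qFK construction for \emph{every} admissible input $(\cL,\nabla_{\cL})$ with curvature of type $(1,1)$, unitary or not, so $\tau$-invariance carries no unitarity information. Concretely, what reality gives (and what the paper uses it for) is that the contact distribution, pulled back by $\phi_{1,0}$, is the complexification of a connection on the bundle $(\cL^{1,0})^*\tp\cL^{0,1}$ over the real submanifold $S$ --- and that restricted connection is \emph{always} unitary, vacuously: in a local frame with $\nabla=d+\alpha$, the induced connection on $(\cL^{1,0})^*\tp\cL^{0,1}|_S\cong\cL^{-1}\tp\overline{\cL}$ has connection form $\overline{\alpha}-\alpha$, which only sees $\Imm\alpha$, whereas unitarity of $\nabla$ itself is a condition on $\Rea\alpha$ (local exactness). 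The paper makes exactly this point parenthetically in its proof. So no amount of chasing the real structure over the diagonal can detect unitarity; what is needed is a \emph{complexified} unitary structure, i.e.\ a circle subbundle over $S^{\mathbb{C}}$ preserved by the complexified connection. The paper manufactures this from genuinely metric input: along the metrically totally complex $S$, by \cite{Alex2} (Equation 2.2) one has $D^gJ=\gamma K$, $D^gK=-\gamma J$, so the circle bundle of almost complex structures orthogonal to $I$ is $D^g$-parallel along $S$; complexifying this circle bundle over $S^{\mathbb{C}}$ inside the incidence space and projecting to $Z$ produces the parallel circle subbundle that proves $\nabla$ unitary. Your proposal contains no substitute for this step.

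Two further remarks. First, before any such argument one must identify the contact distribution with the connection distribution of the $\nabla$ actually used in the construction; the paper does this through Lemma \ref{div}, Lemma \ref{aff} and Proposition \ref{mtc} (the Legendrians in the fibres of $\Vhp$ are the level sets $s(f_{\overline{x}})=a$ of affine sections, whose $\phi_{1,0}$-preimages are the parallel sections $[af_{\overline{x}}]^{-1}$), while your proposal only gestures at a fibrewise computation at the very end, and for a different purpose. Second, your route to the curvature condition via a K\"ahler--Einstein metric on $Z$ and the identification of $L$ with the $(n+1)$-st root of the anticanonical bundle is a genuinely different idea, but as written it both depends on the broken unitarity step (through $L|_{S^{1,0}}\cong\cL^{\tp 2}$) and leaves the key comparison of $\omega_Z$ restricted to $S^{1,0}$ with $\omega_S$ open; the paper instead gets the proportionality immediately from the same citation \cite{Alex2}, since the curvature $d\gamma$ of the circle bundle over a metrically totally complex submanifold is proportional to $g(I\cdot,\cdot)$, i.e.\ to the K\"ahler form along $S$.
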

\begin{proof} 
As discussed in Section \ref{Feix} the twistor space $Z$ in this case is obtained by gluing open subsets of two vector bundles $\Vhp$ and $\Vam$. The bundle $\Vhp$ is the bundle over the leaf space of the $(1,0)$-foliation such that the fibre over a leaf is the space dual to the vector space of `affine sections' of $\cL^{1,0}\otimes (\cL^{0,1})^*$ (see Section \ref{Feix}).  By Lemma \ref{div}, for any $\overline{x}\in S^{0,1}$ the manifold $\cD^{1,0}_{\overline{x}}$ is  a vector subspace of $\Vhp_{\overline{x}}$, hence we can find an affine section $f_{\overline{x}}$ along the leaf $\overline{x}$ of the $(1,0)$-foliation such that is given by the equation $s(f_{\overline{x}})=0$, for $s\in \Vhp_{\overline{x}}$.

 Moreover, by Lemma \ref{aff} and Proposition \ref{mtc}, the Legendrians contained in $\Vhp_{\overline{x}}$ are affine subspaces modeled on $\cD^{1,0}_{\overline{x}}$, hence the leaves of the foliation by the Legendrians are given by the equations $s(f_{\overline{x}})=a$ for $a\in\mathbb{C}$. As the map $\phi_{1,0}$ is the evaluation map, the inverse images of the Legendrians in $(\cL^{1,0})^*\otimes \cL^{0,1}$ are $[af_{\overline{x}}]^{-1}$. An analogous argument gives a foliation by Legendrians  of the fibers of the bundle $\Vam$ defining corresponding sections $\tilde{f}_x$ over the leaves of the $(0,1)$-foliation. Hence for points from the intersection of  $\Vam\cap \Vhp\subset Z$ (which coincide with the images of the maps $\phi_{1,0}$ and $\phi_{0,1}$ on the complement of the zero sections) through any point we constructed two Lagrangians which are transversal hence the tangent spaces to the Lagrangians span the contact distribution. 

The pull-back by $\phi_{1,0}$ of the contact distribution is the distribution on  $(\cL^{1,0})^*\otimes \cL^{0,1}$ spanned by the tangent spaces to  $[af_{\overline{x}}]^{-1}$ and $b\tilde{f}_x$. The contact distribution on $Z$ preserves the real structure which in qFK is induced on $Z$ by the complexification. Therefore, sections  $[af_{\overline{x}}]^{-1}$ and $b\tilde{f}_x$ intersecting in a fibre of $(\cL^{1,0})^*\otimes \cL^{0,1}$ over the real submanifold $S\subset S^{\mathbb{C}}$ are conjugated. Hence the obtained distribution is defined by a complexification of a connection on the bundle $(\cL^{1,0})^*\otimes \cL^{0,1}$ over $S$ with curvature of type $(1,1)$. We have shown that the parallel sections along the leaves are inverses of non-vanishing affine sections along the leaves, hence the connection on $(\cL^{1,0})^*\otimes \cL^{0,1}$ that they generate come from the connection $\nabla$ on $\cL\otimes\mathcal{O}(1)$ used in the construction.

 By \cite{Alex2} (see Equation 2.2 and remark after Theorem 1.12), along any metrically totally complex submanifold with the complex structure $I$ of a quaternion-K\"ahler manifold $(M,g)$, the circle bundle of almost complex structures orthogonal to $I$ is $D^g$ invariant and there exists a $1$-form $\gamma$ on $M$ such that along this submanifold we have \begin{equation}\label{Dg}
  D^gJ=\gamma K,\ \  D^gK=-\gamma J,
 \end{equation}
 where $J,K$ are orthogonal. Moreover, in this setting the curvature of the connection on the circle bundle is equal to $d\gamma$ and is proportional to $g(I\cdot,\cdot)$.

Recall that the manifold $M$ admits a unique complex structure invariant under the $S^1$-action, hence globally on $M$ we have a well defined circle bundle of almost complex structures orthogonal to $I$. This bundle is a smooth submanifold of $Z$ and by Equation \ref{Dg} restricted to $S\subset M$ is preserved by the contact distribution (but globally this does not hold). Complexifying, we obtain a circle bundle over $S^{\mathbb{C}}\subset M^{\mathbb{C}}$ on the incidence space (see Definition \ref{inc}), which is preserved by the complexified connection. Taking the projection to $Z$ from the incidence space we obtain a set $\cU$ and it is easy to see using the flatness of the connection along the leaves of the $(1,0)$ and $(0,1)$-foliation that this is the union of Legendrians contained in the fibres of $\Vhp$ through the points on the canonical twistor lines belonging to the circle bundle. Note that we can analogously describe the set $\cU$ using Legendrians in fibres of bundle $\Vam$ and that this implies that the both approaches give the same set. 

The inverse image of $\cU$ is a circle bundle over $S^\mathbb{C}$ preserved by the connection which defines a complexified unitary structure on $(\cL^{1,0})^*\otimes \cL^{0,1}$. This proves that the connection $\nabla$ on $\cL\otimes\mathcal{O}(1)$ is unitary (note that the induced connection on  $(\cL^{1,0})^*\otimes \cL^{0,1}$ restricted to the real submanifold $S$ is always unitary hence to conclude that $\nabla$ itself is unitary we needed a complexified unitary structure).

The condition on the curvature follows from the fact that the curvature of the circle bundle over any metrically totally complex submanifold is proportional to $g(I\cdot,\cdot)$ hence along $S$ it is proportional to the  K\"ahler form.
\end{proof}

\begin{remark}
We have shown that the complex structure induced by the quaternion-K\"ahler moment map, in quaternionic Feix--Kaledin construction corresponds to subbundles of the bundles $\Vhp$ and $\Vam$ defined as annihilators of the parallel sections for $\nabla^c$ along the leaves of the $(1,0)$ and $(0,1)$-foliations on $S^c$ respectively.  It is straight forward to see that through quaternionic Feix--Kaledin construction, we always obtain in this way a complex structure, provided that the c-projective structure is generated by a real-analytic connection with curvature of type $(1,1)$: in this case any such a connection in the c-projective class equip $M$ with a complex quaternionic structure. Such manifolds have been studied recently by N. Hitchin \cite{Hit3}.
\end{remark}

\section{Construction of a holomorphic contact distribution on the twistor space}\label{dist}
The aim of this section is to prove that the necessary conditions for the twistor space obtained by qFK to be a twistor space of  quaternion-K\"ahler manifold are sufficient: assuming that the initial data in the construction satisfy the necessary conditions given in Theorem \ref{necessary}, we will explicitly construct a holomorphic distribution on the twistor space which is invariant under the real structure and the $\mathbb{C}^{\mult}$-action and transversal to canonical real twistor lines.
\subsection{Standardization}\label{S}
The method in \cite{Feix} involves a local identification of each of the bundles of affine functions (i.e., halves of the twistor space) with a standard flat model (note that this identification can be done for both bundles, but the identifications are not compatible with each other unless the structure is flat). We will show a similar result for qFK in the case when the c-projective structure is generated by a real-analytic connection with type $(1,1)$ curvature.

Let $(S,D)$ be a manifold with a connection with curvature of type $(1,1)$ and $\nabla_{\cL}$ be a connection compatible with the holomorphic structure with type $(1,1)$ curvature on a holomorphic line bundle $\cL$. By $\nabla$ we denote the  tensor product connection on $\cL\tp\mathcal{O}(1)$, where the connection on $\mathcal{O}(1)$ is induced by $D$. Consider the quaternionic Feix--Kaledin construction with $(S,[D]_c,\cL,\nabla_{\cL})$, where $[D]_c$ is the c-projective structure generated by $D$.
\begin{lemma}\label{stand}
The triples $((\Lp)^*\tp\Lm,\php,\Vhp)$ and $((\Lm)^*\tp\Lp,\phm,\Vam)$ can be locally standardized such that they are holomorphically equivalent to the standardized halves of the hypercomplex Feix construction.
\end{lemma}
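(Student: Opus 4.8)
The plan is to follow the standardization procedure of \cite{Feix}, the only new ingredient being the line bundle $\cL$. The key point is that, because $D$ and $\nabla_{\cL}$ both have curvature of type $(1,1)$, the induced connections $D^c$ and $\nabla$ are flat along the leaves of the foliations (Section \ref{comp}). Flatness along a leaf is exactly what lets one pass to the flat model: parallel transport trivializes the line bundle and straightens the connection, so that the space of affine sections becomes the standard space of affine functions on $\mathbb{C}^n$ tensored with a one-dimensional parallel factor.

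I would carry this out for the triple $((\Lp)^*\tp\Lm,\php,\Vhp)$, the other half being entirely symmetric. For $\overline{x}\in S^{0,1}$ let $S^{1,0}_{\overline{x}}$ be the $(1,0)$-leaf through $\overline{x}$; along it both $D^c$ and the connection $\nabla$ on $\Lp$ are flat. A choice of $D^c$-flat affine coordinates on the leaf identifies the space of affine functions with the span of $1$ and $n$ linear coordinates, while a $\nabla$-parallel frame trivializes $\Lp$ along the leaf. Since in the present simplified setting the affine sections of $\Lp$ are tensor products of affine functions with such parallel sections, these two choices together identify $\Aff(\Lp)_{\overline{x}}$ with the rank $n+1$ bundle of affine functions of the hypercomplex Feix construction. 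Dualizing and tensoring by $\Lm$, and recalling that $\php$ is the evaluation map, transports the cone and its blow-down structure to the corresponding flat models, which yields the asserted holomorphic equivalence of triples.

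The step I expect to be the main obstacle is promoting these leafwise identifications to a single holomorphic equivalence over $S^{0,1}$. The affine coordinates and the parallel frame are each determined only after an initial normalization along the leaf, and one must arrange these normalizations to depend holomorphically on the leaf. I would fix them by prescribing initial data along the holomorphic transverse section furnished by a neighbourhood of the diagonal $S\subset S^{\mathbb{C}}$, and then invoke the holomorphic dependence on parameters of parallel transport for the holomorphic flat connections, together with that of the solutions of the flat $1$-jet equations defining the affine sections. Once a holomorphic frame for $\Aff(\Lp)$ is obtained in this way, the $\cL$-factor splits off as a trivial parallel line, so each half is holomorphically equivalent to the standardized half of the hypercomplex Feix construction, with the two standardizations incompatible in general precisely as in \cite{Feix}.
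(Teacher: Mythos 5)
Your proposal is correct and takes essentially the same route as the paper's proof: the paper invokes Feix's Lemma 2 to obtain holomorphic coordinates $p_1,\ldots,p_n,q_1,\ldots,q_n$ on $S^{\mathbb{C}}$ with the $p_i$ affine along the $(1,0)$-leaves, chooses a holomorphic section $f_0$ of $(\Lp)^*\tp\Lm$ parallel along the leaves (your leafwise parallel frame, packaged so that it depends holomorphically on the leaf), and then checks that in the induced coordinates $(q_1,\ldots,q_n,f_0^{-1},f_0^{-1}p_1,\ldots,f_0^{-1}p_n)$ on $\Vhp$ the map $\php$ becomes the standard blow-down $(p_1,\ldots,p_n,q_1,\ldots,q_n,t)\mapsto(t,tp_1,\ldots,tp_n,q_1,\ldots,q_n)$. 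The holomorphic-dependence-on-the-leaf issue you flag as the main obstacle is precisely what the citation of Feix's lemma and the single holomorphic choice of $f_0$ dispose of, so your argument matches the paper's.
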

\begin{proof}
We will prove this for $((\Lp)^*\tp\Lm,\php,\Vhp)$, as the proof for the other half is analogous. As explained by Feix (see \cite{Feix}, Lemma 2) we can pick local holomorphic coordinates $p_1,\ldots,p_n,q_1,\ldots q_n$ of $S^c$ such that the leaves of the $(1,0)$-foliation are defined setting $q_1,\ldots q_n$ to be constants (in particular we can take  $(q_1,\ldots q_n)=(\tilde{z}_1,\ldots,\tilde{z}_n)$)  and  $p_1,\ldots,p_n$ are affine functions along the leaves. Choose a section $f_0$ of $((\Lp)^*\tp\Lm)$ which is parallel along the leaves of the $(1,0)$-foliation. Then the affine sections of $\Lp\tp(\Lm)^*$ along the leaves are generated by $f_0^{-1},f_0^{-1}p_1,\ldots,f_0^{-1}p_n$ and hence  $(q_1,\ldots q_n,f_0^{-1},f_0^{-1}p_1,\ldots,f_0^{-1}p_n)$ are local coordinates on $\Vhp$. Let us trivialize  $((\Lp)^*\tp\Lm)$  using $f_0$, so that it becomes the trivial bundle $S^c\times \mathbb{C}$ with the fibre coordinate $t$. Then it is immediate to check that the map $\php$ in these coordinates is given by the formula  $$(p_1,\ldots,p_n,q_1,\ldots q_n,t)\mapsto  (t,tp_1,\ldots,tp_n,q_1,\ldots q_n).$$
\end{proof}
\subsection{Construction of the contact distribution}\label{2}
First we discuss the formula of the connection in local coordinates. Let $\cL\tp\mathcal{O}(1)$ be a line bundle with a holomorphic connection whose curvature is a scalar multiple $c$ of the K\"ahler form. Then in a local trivialization the connection takes the form $$\nabla=d+c\sum_{i=1}^n a_i(z,\overline{z}) dz_i,$$ where 
$d\Sigma_{i=1}^n a_idz_i=\Sigma_{i=1}^n da_i\wedge dz_i=\omega$.
The condition that $\nabla$ is unitary means that there exists positive valued function $h(z,\overline{z})$ such that $$c\sum_{i=1}^n a_i dz_i=\partial \log h=\sum_{i=1}^n\frac{\partial \log h}{\partial z_i}dz_i.$$ In this setting $\eta:=\log h$ is a K\"ahler potential for $\omega$ as $d\partial=\overline{\partial}\partial$.

Now consider the complexified picture. The corresponding connection\\ $d+c\Sigma_{i=1}^n a_i(z,\tilde{z}) dz_i$ on $\Lp$ is flat along the leaves of the $(1,0)$-foliation and the function $e^{-\eta (z,\tilde{z})}=[h(z,\tilde{z})]^{-1}$ is parallel along the leaves of the $(1,0)$-foliation. Observe also that the corresponding connection on $\Lm$ is  $d+c\Sigma_{i=1}^n \tilde{a}_i (z,\tilde{z})d\tilde{z}_i$, where $\tilde{a}_i(z,\tilde{z}):=a_i(\tilde{z},{z})$ is a complexification of $a_i(\overline{z},z)$, hence the connection on $\Lp\tp[\Lm]^*$ is given in this trivialization by the formula:
$$d+c\sum_{i=1}^n a_i dz_i-c\sum_{i=1}^n \tilde{a}_id\tilde{z}_i.$$
\begin{lemma}\label{distr}
The  holomorphic distribution given by a complexification of the connection $\nabla$ on $\mathbb{P}(([\cL\tp\mathcal{O}(1)]^{*})^{\mathbb{C} })=\mathbb{P}(\Lp^{*}\ds\Lm^{*})$ extends via the map $\phi$ to the holomorphic distribution on $Z$.
\end{lemma}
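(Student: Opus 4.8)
The plan is to verify the extension chart by chart. Since $Z$ is glued from tubular neighbourhoods of the zero sections of $\Vhp$ and $\Vam$, and since $\phi$ restricts to a biholomorphism from the complement of the two canonical sections of $\mathbb{P}(\Lp^*\ds\Lm^*)$ onto the overlap of these two neighbourhoods, the complexified $\nabla$-horizontal distribution already pushes forward to a holomorphic distribution on that overlap. The only thing left to prove is that this push-forward extends holomorphically across each of the two zero sections, i.e.\ across the images of the zero sections of the affine bundles $(\Lp)^*\tp\Lm$ and $(\Lm)^*\tp\Lp$. Once these two extensions exist they automatically agree on the overlap (both restrict to the $\nabla$-horizontal distribution) and are interchanged by the real structure $\tau$, so they glue to a single holomorphic distribution on $Z$. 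By this symmetry I would treat only the half $\php\colon(\Lp)^*\tp\Lm\to\Vhp$.

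For the explicit computation I would invoke the standardization of Lemma \ref{stand}: choose coordinates $p_1,\dots,p_n,q_1,\dots,q_n$ on $S^{\C}$ with the $(1,0)$-leaves given by $\{q=\const\}$ and the $p_i$ affine along them, trivialize $(\Lp)^*\tp\Lm$ by a section $f_0$ parallel along the leaves with fibre coordinate $t$, so that $\php(p,q,t)=(t,tp_1,\dots,tp_n,q)$ in fibre coordinates $(w_0,\dots,w_n)$ on $\Vhp$, where $w_0=t$ and $w_i=tp_i$. Because $(\Lp)^*\tp\Lm$ is flat along the $(1,0)$-leaves (the curvature of $\nabla$ being of type $(1,1)$) and $f_0$ is parallel along them, the connection reads $\nabla=d+\sum_i b_i(p,q)\,dq_i$ with no $dp$-component, so its horizontal distribution is spanned by $\partial_{p_i}$ and by $\partial_{q_i}-b_i\,t\,\partial_t$. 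Rescaling the images $d\php(\partial_{p_i})=w_0\,\partial_{w_i}$ to $\partial_{w_i}$, the push-forward on the locus $w_0\ne0$ is generated by $\partial_{w_1},\dots,\partial_{w_n}$ together with
\[
d\php\bigl(\partial_{q_i}-b_i t\,\partial_t\bigr)=\partial_{q_i}-b_i\Bigl(\textstyle\sum_{k=0}^n w_k\partial_{w_k}\Bigr)\equiv \partial_{q_i}-b_i\,w_0\,\partial_{w_0}
\]
modulo $\partial_{w_1},\dots,\partial_{w_n}$, where I used that the Euler field is the image of $t\,\partial_t$.

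The crux, and the step I expect to be the genuine obstacle, is to show that the coefficients $b_i\,w_0=b_i\,t$ of these generators extend holomorphically across $\{w_0=0\}$. I would prove this by establishing that each $b_i$ is an \emph{affine} function of $p$ along the $(1,0)$-leaves. This is precisely where the hypotheses of Theorem \ref{necessary} enter: writing the unitary connection through a K\"ahler potential $\eta$ of $\omega$ (so that $c\sum a_j\,dz_j=\partial\eta$), the transverse connection coefficient is a constant multiple of $\partial_{\tilde z_i}\eta$, and the K\"ahler identity $\partial_j g_{i\bar m}=\partial_i g_{j\bar m}$ gives $D\bigl(d\,\partial_{\tilde z_i}\eta\bigr)=0$ along the leaves, i.e.\ $\partial_{\tilde z_i}\eta$ is affine. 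Invariantly this is the statement — already isolated in Lemma \ref{aff} and the proof of Theorem \ref{necessary} — that the parallel sections of $\nabla$ along the leaves are the inverses of \emph{affine} sections, hence of fibre-linear functions on $\Vhp$. Once $b_i=\sum_j c_{ij}(q)p_j+c_{i0}(q)$, one obtains $b_i\,t=\sum_j c_{ij}(q)\,w_j+c_{i0}(q)\,w_0$, which is linear in the fibre coordinates and therefore holomorphic on all of $\Vhp$, in particular across the zero section. This is the coordinate shadow of the fact that the fibrewise Legendrians of Lemma \ref{aff} are genuine affine hyperplanes $\{w_0=\const\}$ filling $\Vhp_{\overline x}$, with common direction $\cD^{1,0}_{\overline x}=\{w_0=0\}$ (cf.\ Lemma \ref{div} and Proposition \ref{mtc}).

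Finally I would assemble the pieces. The extended generators $\partial_{w_1},\dots,\partial_{w_n}$ and $\partial_{q_i}-b_i w_0\partial_{w_0}$ are everywhere independent and holomorphic, so they span a holomorphic rank-$2n$ distribution on the whole neighbourhood of the zero section of $\Vhp$; the analogous argument on $(\Lm)^*\tp\Lp$ extends the distribution across the zero section of $\Vam$; and the two extensions glue as explained above to a holomorphic distribution on $Z$. I would also record, for later use, that this distribution is invariant under the fibrewise $\C^{\mult}$-action (its generators are homogeneous up to rescaling the $\partial_{w_i}$), is $\tau$-invariant, and is transversal to the canonical real twistor lines, since the Euler field $\sum_k w_k\partial_{w_k}$ tangent to these lines is not contained in the span of the generators. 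The entire difficulty is concentrated in the holomorphic extension across the zero sections, which rests on the affineness of the connection coefficients along the leaves — the precise manifestation of the K\"ahler, type-$(1,1)$ and unitarity assumptions.
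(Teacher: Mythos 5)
Your proof is correct and follows essentially the same route as the paper's: both rest on the standardization of Lemma \ref{stand} together with the observation that unitarity expresses the transverse connection coefficients as derivatives $\partial\eta/\partial\tilde z_i$ of a K\"ahler potential, which are affine along the $(1,0)$-leaves, so that after push-forward the connection data become linear in the fibre coordinates of $\Vhp$ and therefore extend across the zero section. The only difference is cosmetic: the paper phrases this for the connection $1$-form, choosing $p_i=\tilde a_i$ and $f_0=e^{\eta}$ so that $e^{-\eta}\varphi$ visibly extends, whereas you push forward the horizontal vector fields and verify affineness of the coefficients $b_i$ directly.
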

\begin{proof}
We will show that the distribution extends to $Z\cap \Vhp$. The proof for the other half is analogous. The connection distribution on $[\Lp]^*\tp\Lm$ is given as the kernel of the following $1$-form
$$\varphi:= df-fc\Sigma_{i=1}^n a_i dz_i+cf\Sigma_{i=1}^n \tilde{a}_id\tilde{z}_i,$$
where $a_i,\tilde{a}_i$ are defined above and $f$ is a fibre coordinate.  Now observe that, as $(1,0)$-foliation is by definition given by $\tilde{z}=const$ and 
 $d \tilde{a}_i\wedge d\tilde{z}_i=-\omega$ we have that $\tilde{a}_i$ are affine along the leaves (see \cite{Feix}) and hence the standardizing coordinates $p_i$ from Lemma \ref{stand} can be taken to be equal $\tilde{a}_i$.
Moreover we can set the standardizing trivialization $f_0$ to be equal $e^{\eta}$. By setting $t:=e^{-\eta}f$ we pass to the standardizing trivialization of $\Lp\tp[\Lm]^*$  and the connection $\nabla$ in this coordinates is equal
$$d[e^{\eta}t]- ce^{\eta}t\Sigma_{i=1}^n a_i dz_i+ce^{\eta}t\Sigma_{i=1}^n \tilde{a}_id\tilde{z}_i=e^{\eta}[dt+ct\Sigma_{i=1}^n \frac{\partial \eta}{\partial\tilde{z}_i}d\tilde{z}_i+ct\Sigma_{i=1}^n \tilde{a}_id\tilde{z}_i].$$
As $h$ on $S$ is a positive function, its logarithm $\eta$ is real valued on $S$ hence $[\partial \eta](z,\overline{z})=[\overline{\partial} \eta] (\overline{z}, z)$ and therefore $\frac{\partial\eta}{\partial\tilde{z}_i}=\tilde{a}_i$.
Using the standardized coordinates on $\Vhp$ we get that the form $e^{-\eta}\varphi$ extends to $\Vhp$ which completes the proof.

\end{proof}
\begin{remark}
The contact form on a twistor space has values in a line bundle which after restriction to any of the real twistor lines is isomorphic to $\mathcal{O}(2)$. In our setting this is represented by the fact that although the distribution given by $\varphi$ extends to $Z$, the form $\varphi$ does not. Indeed using the notation from the proof of Lemma \ref{distr}, the form $e^{-\eta}\varphi$ extends to one half of the twistor space (namely to $\Vhp$) while  $e^{\eta}\varphi$ extends to the other one ($\Vam$).
\end{remark}
\begin{thm}\label{co}
Let $S$ be a real-analytic K\"ahler manifold, $\cL$ a holomorphic line bundle on $S$ with a connection $\nabla_{\cL}$ such that the connection $\nabla$ induced on $\cL\otimes\mathcal{O}(1)$ is a unitary connection with curvature equal to $c\omega$, where $\omega$ is the K\"ahler form. Then the twistor space $Z$ obtained by qFK from $(S,\omega, \cL,\nabla_{\cL})$ is a holomorphic contact manifold which is a twistor space of a quaternion-K\"ahler manifold $M$ with scalar curvature equal to $2c$. Moreover the  $S^1$-action on $M$ coming from the construction is an isometry.

\end{thm}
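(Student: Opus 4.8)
The plan is to show that the twistor space $Z$ produced by the quaternionic Feix--Kaledin construction from $(S,\omega,\cL,\nabla_{\cL})$ satisfies the three conditions (i)--(iii) of LeBrun's characterization recalled in Section~\ref{twist}, and then to read off the scalar curvature and the isometry property. Conditions (i) and (ii) -- the family of twistor lines with normal bundle $\mathbb{C}^{2n}\otimes\mathcal{O}(1)$ and the fixed-point-free real structure $\tau$ -- hold for every output of the construction (Section~\ref{Feix}). By Lemma~\ref{distr} the complexified connection $\nabla$ already determines a holomorphic codimension-one distribution $D$ on $Z$, so for (iii) it remains to check that $D$ is \emph{contact}, that it is $\tau$-invariant, and that it is transversal to the real twistor lines. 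LeBrun's Theorem~1.3 then produces a pseudo quaternion-K\"ahler manifold $M$ whose twistor space is $Z$.

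First I would verify the contact condition by a direct computation in the standardized coordinates of Lemma~\ref{stand}, using the local connection form from the proof of Lemma~\ref{distr},
\[
\varphi = df - cf\sum_{i=1}^n a_i\,dz_i + cf\sum_{i=1}^n \tilde a_i\,d\tilde z_i = df - cf\,\beta,\qquad \beta:=\sum_i a_i\,dz_i-\sum_i\tilde a_i\,d\tilde z_i.
\]
Since $\sum da_i\wedge dz_i=\omega$ and $\sum d\tilde a_i\wedge d\tilde z_i=-\omega$, one gets $d\beta=2\omega$ and hence $d\varphi=-c\,df\wedge\beta-2cf\,\omega$. Using $df\wedge df=0$ and $\beta\wedge\omega^n=0$ (a $(2n+1)$-form in only the $2n$ base variables), the sole surviving term is
\[
\varphi\wedge(d\varphi)^n=(-2cf)^n\,df\wedge\omega^n,
\]
which is nonzero precisely because $c\neq 0$ and $\omega^n\neq 0$ by nondegeneracy of the K\"ahler form. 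This is the crux: the proportionality constant $c$ of the curvature is exactly what makes the horizontal distribution maximally non-integrable, the $c=0$ case degenerating to the hyperk\"ahler Feix construction. The computation is local on each half; since $D$ extends across the gluing of $\Vhp$ and $\Vam$ (the $L$-valued contact form restricting to $\mathcal{O}(2)$ along twistor lines, cf.\ the remark after Lemma~\ref{distr}), the contact condition holds on all of $Z$. The $\tau$-invariance of $D$ is the counterpart of the argument in Theorem~\ref{necessary}: unitarity of $\nabla$ is precisely the statement that the complexified connection respects the real structure $\tau$ induced on $Z$. Transversality to the canonical real twistor lines is automatic, since these are the vertical fibres of $\mathbb{P}(\Lp^{*}\oplus\Lm^{*})\to S^{\mathbb{C}}$ while $D=\ker\varphi$ is horizontal; a vertical generator $\partial_f$ satisfies $\varphi(\partial_f)=1\neq 0$.

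It then remains to upgrade the pseudo quaternion-K\"ahler $M$ to a genuine (Riemannian) one near $S$ and to identify the $S^1$-isometry. For the signature I would use real-analyticity: by construction the metric restricts on the fixed submanifold $S$ to the given positive-definite K\"ahler metric, so the $Q$-Hermitian metric is positive definite on a neighbourhood of $S$. For the isometry, note that the $\mathbb{C}^*$-action on $Z$ (scalar multiplication in the fibres) preserves $D$, since $\varphi$ is homogeneous of weight one in $f$; a holomorphic $\mathbb{C}^*$-action preserving the contact distribution descends to an $S^1$-action on $M$ preserving the quaternion-K\"ahler metric, i.e.\ an isometry.

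The last and most delicate point is to pin down the scalar curvature as \emph{exactly} $2c$ rather than merely proportional to $c$. Here the plan is to normalize the contact line bundle $L$ as the $(n+1)$-st root of the anticanonical bundle of $Z$ and to track $c$ through this identification; equivalently, one compares the twistorial moment map $\theta(\hat X)$ with the $\mathbb{C}^*$-generator $\hat X$ (Section~\ref{moment}) and extracts the Einstein constant from the normalization of $\theta$. I expect this bookkeeping -- reconciling the weight-one scaling in $f$, the factor $-2c$ appearing in $d\varphi$, and the standard relation between the contact form and the scalar curvature -- to be the main obstacle, since it is here that the precise numerical factor must be obtained; the safest route is to fix the normalization against a model such as the flat or homogeneous case and then invoke real-analytic continuation.
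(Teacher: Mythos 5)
Your proof follows the same skeleton as the paper's (LeBrun's characterization applied to the distribution of Lemma~\ref{distr}, positivity read off along $S$, isometry from $\mathbb{C}^{\mult}$-invariance of the distribution), and the explicit computation $\varphi\wedge(d\varphi)^n=(-2cf)^n\,df\wedge\omega^n$ is correct as far as it goes; but there is a genuine gap at the crux. That expression is \emph{not} nowhere-zero: it vanishes identically on $\{f=0\}$, i.e.\ on the locus that the blow-down $\phi_{1,0}$ contracts onto the zero section of $\Vhp$. So your computation establishes non-degeneracy only away from the zero sections of $\Vhp$ and $\Vam$ --- which is exactly not enough, since the canonical real twistor lines pass through those zero sections and LeBrun's theorem must be applied on a neighbourhood of them. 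Your claim that ``since $D$ extends across the gluing, the contact condition holds on all of $Z$'' is not a valid inference: a holomorphic distribution can extend and yet be contact only off a divisor. Two ways to close this. Either pass to the standardized coordinates of Lemma~\ref{stand}, as the paper implicitly does: there $e^{-\eta}\varphi$ becomes $dv_0+2c\sum_{i=1}^n v_i\,dq_i$, the standard contact form on the vector bundle $\Vhp$, manifestly non-degenerate across the zero section (the vanishing of your expression at $f=0$ is then revealed as an artifact of the degeneracy of the blow-down map, not of the contact structure). Or argue by codimension: $\theta\wedge(d\theta)^n$ is a holomorphic section of a line bundle, so its zero locus is empty or a hypersurface; by your computation it is contained in the union of the zero sections, which has codimension $n+1\geq 2$, hence it is empty.

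Two further steps are asserted rather than proved. For positive-definiteness, positivity of the metric restricted to $TS$ does \emph{not} by itself give positivity on $T_sM$ for $s\in S$ (a split-signature metric can restrict positively to a half-dimensional subspace); you need that $S$ is metrically totally complex, i.e.\ the $g$-orthogonal splitting $TM|_S=TS\oplus JTS$ together with $g(J\cdot,J\cdot)=g(\cdot,\cdot)$, which is how the paper argues before invoking openness. For the scalar curvature you explicitly leave the identification with $2c$ open, proposing normalization against a model; the paper has a concrete argument you are missing: by the proof of Theorem~\ref{necessary}, $2c\omega$ is the curvature of the circle bundle of complex structures orthogonal to $I$ along the metrically totally complex submanifold $S\subset M$, and by \cite{Alex2} (Equation~\ref{Dg} and the remark after Theorem~1.12 there) this curvature is proportional to $g(I\cdot,\cdot)$ with the proportionality constant being the scalar curvature. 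Without this (or an equivalent) argument, the constant $2c$ in the statement --- and hence the theorem as stated --- is not established.
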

\begin{proof}
By Lemma \ref{distr}, we have already proved that $Z$ is a holomorphic contact manifold. By construction, the obtained distribution is preserved by the real structure on $Z$ and transversal to canonical real twistor lines. 
The existence of such contact distribution on the twistor space is equivalent to the existence of a pseudo quaternion-K\"ahler metric. As $S$ is metrically totally complex, we have that for $J$ anti-commuting with $I$ the tangent bundle splits $TM=TS\ds JTS$ and this splitting is $g$-orthogonal. Moreover, we have also that $g(J\cdot,J\cdot)=g(\cdot,\cdot)$ hence the positive-definiteness of the metric follows directly from the positive-definiteness of the initial K\"ahler metric.
In the proof of Theorem \ref{necessary} we have shown that $2c\omega$ is the curvature of the circle bundle of orthogonal complex structures over the metrically totally complex manifold $S\subset M$. By \cite{Alex2}, this implies that $2c$ is the scalar curvature of the underlying quaternion-K\"ahler manifold.

Finally, it is straightforward to check that the $C^{\mult}$-action on $Z$ given by scalar multiplication in fibres preserves the contact distribution hence it corresponds to an isometry on $M$.
\end{proof}
\begin{remark}
Similarly as in the hyperk\"ahler case of Feix, we can generalize this argument to pseudo quaternion-K\"ahler metric - by the construction we obtain quaternion-K\"ahler metrics with signature $(2p,2q)$ when starting from a K\"ahler metrics with signature $(p,q)$.
\end{remark}

\section{Gluing along the submanifold}\label{gl}

Firstly note that the qFK construction  (\cite{BC}) is global in some neighbourhood of $S$: although during the construction we have to restrict several times to local neighbourhoods, they are always tubular neighbourhoods of the whole manifold $S$ (and on the level of twistor space it is a tubular neighbourhood of the union of canonical real twistor lines - i.e., the one that correspond to $S$). Hence in particular if $S$ is compact  K\"ahler manifold and $\cL$ is a holomorphic line bundle on $S$ with a real analytic holomorphic connection $\nabla$ with curvature of type $(1,1)$ then we can obtain a quaternionic manifold $M$ which contains the whole $S$ as the zero set of an $S^1$-symmetry and any two quaternionic manifolds which on the fixed points set $S$ induce the same structure (i.e., the  K\"ahler metric and the line bundle with a connection) are locally equivalent near $S$. Also observe, that the construction of the contact distribution in Section \ref{2} glue uniquely along $S$ (viewed as the zero section in $\Vhp$ and $\Vam$ respectively), hence we obtain the following proposition.
\begin{prop}
Suppose that $S$ is a compact K\"ahler manifold with a holomorphic line bundle $(\cL\otimes\mathcal{O}(1), \nabla)$ with a connection satisfying the assumptions of Theorem \ref{co}. Then there exist (not necessarily compact) quaternion-K\"ahler manifold $M$ containing $S$ as a component of the fixed points set of an isometric $S^1$-action with no triholomorphic points on $S$. Moreover all such $M$ are equivariantly isomorphic near $S$.
\end{prop}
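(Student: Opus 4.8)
The plan is to upgrade the local statements of Theorems~\ref{necessary} and~\ref{co} to statements valid on a full neighbourhood of $S$, using that the entire qFK machine is carried out over tubular neighbourhoods of all of $S$. For existence, I would first run the qFK construction on the given data $(S,\omega,\cL,\nabla_{\cL})$ over such a neighbourhood. As recalled at the start of this section, every restriction made during the construction is to a tubular neighbourhood of the whole manifold $S$, so compactness of $S$ guarantees that the glued object $Z$ is a genuine (Hausdorff) complex manifold carrying the family of twistor lines, the real structure $\tau$, and the fibrewise $\mathbb{C}\mult$-action, all defined near the union of canonical real twistor lines.

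Second, I would globalize the contact distribution. The complexified connection $\nabla$ determines a holomorphic distribution on $\mathbb{P}(\Lp^{*}\ds\Lm^{*})$ intrinsically (it is the connection distribution, independent of the trivializations used in Lemma~\ref{stand}). By Lemma~\ref{distr} this distribution extends across each zero section through the blow-down maps, and the remark following that lemma shows that the two extensions, represented on the halves by $e^{-\eta}\varphi$ and $e^{\eta}\varphi$, share the same kernel on the overlap. Hence the local pieces assemble into a single global holomorphic contact distribution on $Z$ which is $\tau$-invariant, $\mathbb{C}\mult$-invariant and transversal to the canonical real twistor lines. LeBrun's criterion~\cite{LeBrun} (condition (iii) of Section~\ref{twist}) together with the positivity argument of Theorem~\ref{co} then yields a quaternion-K\"ahler $M$, defined on a neighbourhood of $S$, whose twistor space is $Z$; the $S^1$-action induced by $\mathbb{C}\mult$ is an isometry, $S$ is a maximal-dimensional component of its fixed-point set, and since the action rotates the canonical twistor lines it has no triholomorphic points on $S$. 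This $M$ is non-compact in general, being only an open neighbourhood of $S$.

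For uniqueness, I would invoke Theorem~\ref{necessary} as the inverse construction. If $M'$ is any quaternion-K\"ahler manifold with the stated properties, then near $S$ its twistor space arises by qFK and the data it induces on $S$ is a K\"ahler metric together with a unitary connection on $\cL\tp\mathcal{O}(1)$ of curvature proportional to $\omega$; moreover Theorem~\ref{necessary} identifies the contact distribution with precisely the one generated by that connection. Since $M'$ restricts to the prescribed $(S,\omega,\cL,\nabla)$, this induced data coincides with the starting data. The qFK uniqueness of~\cite{BC}, valid over a neighbourhood of all of $S$, then produces an isomorphism of the two twistor spaces matching twistor lines, real structures and $\mathbb{C}\mult$-actions, under which the contact distributions agree because both equal the distribution built from $\nabla$. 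Passing to the moduli spaces of real twistor lines gives the desired $S^1$-equivariant isomorphism of $M$ and $M'$ near $S$.

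The step I expect to be the main obstacle is the global gluing of the contact distribution: one must verify that the coordinate-dependent $1$-forms $\varphi$ of Lemma~\ref{distr}, and in particular the two competing extensions $e^{\pm\eta}\varphi$ across the zero sections of $\Vhp$ and $\Vam$ (both of which are copies of $S$), fit together into a single contact subbundle over the whole compact $S$ --- that their kernels agree on all overlaps and that $\theta\wedge(\d\theta)^{n}\neq 0$ holds everywhere rather than merely on a chart. Here compactness of $S$ enters only mildly, to ensure Hausdorffness of the glued $Z$; the substantive input is the canonicity of the extension and the matching of the two halves already isolated in Lemma~\ref{distr} and the remark following it.
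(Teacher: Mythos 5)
Your proposal is correct and follows essentially the same route as the paper: existence by running qFK globally over a tubular neighbourhood of all of $S$ and observing that the contact distribution of Lemma \ref{distr} (being the canonical extension of the intrinsically defined connection distribution) glues uniquely across the zero sections of $\Vhp$ and $\Vam$, then uniqueness via Theorem \ref{necessary} combined with the qFK classification of \cite{BC}. The only minor inaccuracy is attributing Hausdorffness of $Z$ to compactness of $S$ — this is arranged in the qFK construction itself by the choice of tubular neighbourhoods, independently of compactness — but this does not affect the argument.
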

However, we can prove a stronger fact. Note that as the contact line bundle (i.e. the bundle in which the contact form  takes the values) is the vertical bundle of the twistor fibration, along the zero section of $\Vhp$ it is isomorphic to $\Lp\tp[\Lm]^*|_{S}$, where $S$ is the diagonal in $S^{\mathbb{C}}$. By definition of the complexification we have that along the diagonal $\overline{\Lp}=\Lm$ and, as the bundle  $\cL\otimes\mathcal{O}(1)$ is unitary (by Theorem \ref{necessary}), we also have that $\Lp=\overline{\Lp}^*$. Hence, we conclude that the contact line bundle along the zero section is equal to $\Lp^{\otimes 2}$, which means that if $S$ is a fixed point set component of a quaternion-K\"ahler manifold with an isometric $S^1$-action with no triholomorphic points on $S$, then the bundle $\cL\otimes\mathcal{O}(1)$ used in the construction must be a square root of a holomorphic bundle with a connection which is globally defined  on $S$. Now we will show that the converse is also true.
\begin{thm}\label{3}
 Suppose that $(S,\omega)$ is a K\"ahler manifold equipped with a holomorphic line bundle $\mathfrak{L}$  with a unitary connection on $S$ which has curvature $C\omega$ proportional to the K\"ahler form. Let $\cL$ be such that $\cL\otimes\mathcal{O}(1)$ is a local square root of $\mathfrak{L}$ and $\nabla$ is the induced connection on $\cL$. Then there exists a quaternion-K\"ahler manifold $M$ with scalar curvature $C$ equipped with an isometric $S^1$-action such that $S$ is a component of the fixed point set and $M$ locally arises by the quaternionic Feix--Kaledin construction from  $\cL$ with the induced connection. Moreover such a manifold is unique near $S$.
 \end{thm}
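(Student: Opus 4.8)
The plan is to obtain $M$ locally from Theorem~\ref{co}, where square roots of $\mathfrak{L}$ exist, and then to glue the resulting local twistor spaces along $S$; the only genuinely new point beyond the preceding Proposition is that $\cL\tp\mathcal{O}(1)$ is defined only locally, so passing to a global object requires controlling the choice of local square root. First I would cover $S$ by open sets $U_\alpha$ on which the $2$-torsion obstruction to extracting a square root vanishes, producing on each $U_\alpha$ a holomorphic line bundle $\cL_\alpha\tp\mathcal{O}(1)$ with $(\cL_\alpha\tp\mathcal{O}(1))^{\tp2}\cong\mathfrak{L}$ and the unique unitary connection squaring to the given one, whose curvature is therefore $\tfrac{C}{2}\omega$. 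Applying Theorem~\ref{co} on each $U_\alpha$ with $c=\tfrac{C}{2}$ yields a holomorphic contact twistor space $Z_\alpha$, invariant under its real structure and $\C^{\mult}$-action and transversal to canonical real twistor lines, hence by the criterion~(iii) of Section~\ref{twist} a quaternion-K\"ahler manifold $M_\alpha$ of scalar curvature $C$ having $U_\alpha\sub S$ as a fixed-point component of an isometric $S^1$-action with no triholomorphic points.

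The core of the proof is to glue the $Z_\alpha$. On an overlap $U_\alpha\cap U_\beta$ the two square roots differ by $\eps_{\alpha\beta}:=(\cL_\alpha\tp\mathcal{O}(1))^*\tp(\cL_\beta\tp\mathcal{O}(1))$, and since both connections are unitary with curvature $\tfrac{C}{2}\omega$ and squares canonically identified with $\mathfrak{L}$, the bundle $\eps_{\alpha\beta}$ is flat with holonomy in $\{\pm1\}$ and $\eps_{\alpha\beta}^{\tp2}$ canonically trivial. I would then show that tensoring by the complexification of $\eps_{\alpha\beta}$ carries the entire data of diagram~\eqref{dig1}---the bundles $\Vhp,\Vam$, the projective bundle $\mathbb{P}(\Lp^*\ds\Lm^*)$, the maps $\phi_{1,0},\phi_{0,1}$, and the contact distribution of Lemma~\ref{distr}---to its $\beta$-analogue: flatness of $\eps_{\alpha\beta}$ along the leaves identifies the spaces of affine sections up to the $\eps_{\alpha\beta}$-twist, while projectivization is insensitive to the simultaneous twist of $\Lp^*$ and $\Lm^*$, the two being matched through the real structure exactly as $\overline{\Lp}=\Lm$ along the diagonal. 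This gives canonical isomorphisms $Z_\alpha\cong Z_\beta$ over $U_\alpha\cap U_\beta$ preserving the real structure, the $\C^{\mult}$-action and the contact form; the triple-overlap cocycle identity then follows from the canonical triviality of $\eps_{\alpha\beta}\tp\eps_{\beta\gamma}\tp\eps_{\gamma\alpha}$. Gluing produces a single holomorphic contact twistor space $Z$ on a neighbourhood of the union of canonical real twistor lines, and hence the sought global $M$ near $S$.

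For uniqueness I would appeal to Theorem~\ref{necessary}: any quaternion-K\"ahler $M'$ with $S$ a maximal fixed-point component, no triholomorphic points on $S$, and locally of qFK type induces on $S$ the prescribed K\"ahler metric together with a unitary line bundle whose square is the contact line bundle restricted to the zero section, namely $\Lp^{\tp2}=\mathfrak{L}$. Hence the square-root data of $M'$ coincides on each $U_\alpha$ with one of the $\cL_\alpha\tp\mathcal{O}(1)$, and because the construction of Section~\ref{dist} and the recovery of the data in Theorem~\ref{necessary} are mutually inverse, $M'$ is equivariantly biholomorphic to $M_\alpha$ near $U_\alpha$; these local identifications are canonical and glue to an isomorphism $M'\cong M$ on a neighbourhood of $S$.

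The step I expect to be the main obstacle is precisely the verification that the twistor space is \emph{canonically} independent of the local square root, i.e.\ the $\eps$-invariance of the whole of diagram~\eqref{dig1} together with the triple-overlap check, since it is here that the flat $\{\pm1\}$-ambiguity must be shown to act trivially on the glued object rather than merely up to a non-canonical isomorphism.
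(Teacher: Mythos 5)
Your proposal is correct in substance, but it takes a genuinely different route from the paper. You build local twistor spaces $Z_\alpha$ from local square roots and then perform a \v{C}ech-style gluing, showing that the flat $\{\pm1\}$-bundle $\eps_{\alpha\beta}$ acts canonically trivially on all the data of diagram~\eqref{dig1} and checking the cocycle identity on triple overlaps. The paper never glues local models at all: it observes that unitarity gives a complexified hermitian pairing which canonically trivializes $\Lp\tp\Lm$, hence $(\Lp)^*\tp\Lm\cong(\Lm)^2$ and $(\Lm)^*\tp\Lp\cong(\Lp)^2$; since $(\Lp)^2$ and $(\Lm)^2$ are the complexified halves of the \emph{globally} defined bundle $\mathfrak{L}$, the entire construction can be rewritten with globally defined objects --- the projective-line bundle $\cF$ glued from $(\Lm)^2$ and $(\Lp)^2$, and the bundles $\Vhp_e$, $\Vam_e$ of (duals of) affine sections of $(\Lm^2)^*$ and $(\Lp^2)^*$ --- yielding diagram~\eqref{dig2} and a single global twistor space $Z^e$ with no cover, no transition maps and no cocycle verification. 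The step you flag as the main obstacle is resolved by exactly this mechanism: your bundle $(\eps_{\alpha\beta}^{1,0})^*\tp\eps_{\alpha\beta}^{0,1}$ is canonically trivial for the same two reasons (the complexified pairing together with the canonical trivialization of $\eps_{\alpha\beta}^{\tp 2}$), so your transitions are indeed canonical and the triple-overlap identity does hold; the paper's reformulation simply packages this observation once and for all instead of repeating it on every overlap. On uniqueness the two arguments are parallel in spirit, but the paper makes the patching of local identifications precise by passing to the incidence space over the complexification of $S$, restricting the contact distribution to the affine parts to recover the globally defined connection on $(\Lp)^*\tp\Lm\cong(\Lm)^2$, and invoking local equivalence of complexifications together with Theorem 4 of \cite{BC}; this is the content your sketch leaves implicit when you assert that the local identifications ``are canonical and glue.''
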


\begin{proof}
By Theorem \ref{2} such $M$ exists locally near any open subset of $S$ where the bundle $\cL$ is defined.

Firstly observe that if $\cL$ is a unitary line bundle then the complexified hermitian product on $\cL^{\mathbb{C}}=\cL^{1,0}\oplus\cL^{0,1}$ over $S^c$ gives a paring between $\cL^{1,0}$ and $\cL^{0,1}$ hence the bundle $\cL^{1,0}\otimes\cL^{0,1}$ is trivial and as a consequence the bundles $(\Lp)^{\tp k}\tp(\Lm)^{\tp k}$ are canonically trivial for any $k$ rational. Hence the bundle $(\Lp)^*\tp \Lm$ is canonically isomorphic with $(\Lm)^2$. Therefore we obtain a bundle $\cF$ of projective lines over an open neighbourhood of the diagonal $S$ in $S^{\mathbb{C}}$ by gluing globally the vector bundles $\Lm^2$ and $\Lp^2$  using canonical triviality of $\Lp\tp\Lm$ in the following way:
$$\Lm^2\cong[(\Lp)^*\tp \Lm]\tp[\Lp\tp\Lm]\cong [(\Lm)^*\tp\Lp]^*\tp[\Lp\tp\Lm]^*\cong(\Lp^2)^*.$$
 In this setting locally $\cF$ is equal to $\mathbb{P}(\Lp^{*}\ds\Lm^{*})$. 
Denote by $\Vhp_e$ the dual to the bundle of affine sections of  $(\Lm^2)^*$  along the leaves of the $(1,0)$-foliation and by $\Vam_e$ the dual to the bundle of affine sections of $(\Lp^2)^*$ along the leaves of the $(0,1)$-foliation. Then we obtain the following diagram:
\begin{equation} \label{dig2}
\xymatrix@R=1.1cm@C=.9cm{
& (\Lm)^2\ar@{^{(}->}[dl] \ar[r]^-{\phi_{1,0}^e} & \Vhp_e \ar@{~>}[dr] &\\
\cF\ar@{.>}[rrr]^{\phi^e}&&&Z^e\\
&(\Lp)^2\ar@{_{(}->}[ul] \ar[r]^-{\phi_{0,1}^e}&\Vam_e\ar@{~>}[ur]&
}
\end{equation}
The bundle $\Vhp_e$ is defined globally over $S^{0,1}$ and using again canonical triviality of $\Lp\tp\Lm$ we get that it is canonically isomorphic to $\Vhp$ over any subset of $S^{1,0}$ where $\Vhp$ is defined. Similarly $\Vam_e$ is an extension of $\Vam$ to the bundle globally defined over $S^{0,1}$.
Therefore the Diagram \ref{dig2} is an extension of the Diagram \ref{dig1} from Section \ref{Feix} summarizing qFK. Hence the obtained manifold $Z^e$ is locally a twistor space arising by qFK from $(\cL,\nabla)$ and by construction and by Theorem \ref{2} it is immediate to see that it is a twistor space of a quaternion-K\"ahler manifold with the required properties which contains the whole manifold $S$.

The uniqueness of $M$ near $S$ follows from Theorem \ref{necessary} and from \cite{BC} (Theorem $4$): Suppose that $M$ is a quaternionic manifold with the required properies containing the whole manifold $S$. Then the incidence space of the twistor fibration restricted to the canonical twistor lines is locally near the canonical real twistor lines a bundle of projective lines over a complexification of $S$ with two distinguished disjoint sections corresponding to the opposite complex structures. Moreover, the contact distribution defines on the affine parts of the bundle a complexification of the connection used locally in the construction. In particular after the restriction of the bundle to the diagonal we obtain the connection used in the construction on $(\Lp)^*\tp\Lm\cong (\Lm)^2$ which is globally defined on $S$. Using local equivalence of complexifications it follows  that the affine parts of the incidence space must be locally isomorphic to the affine parts of $\cF$ which are glued together in the same way. It follows that the quaternion-K\"ahler manifolds are locally equivalent near $S$.
\end{proof}
\begin{remark}\label{bundle}
In the proof of the above theorem instead of using the triviality of $\Lp\tp\Lm$ we can use the triviality of $(\Lp\tp L^{1,0})\tp(\Lm\tp L^{0,1})$ for any $L$ unitary defined globally over $S$ - then $\cF$ will be constructed as gluing of different bundles but the resulting space as well as the bundles $\Vhp_e$ and $\Vam_e$ will remain unchanged. In particular  in the case when $S=\mathbb{CP}^n$, we can use this observation to identify the bundles $\Vhp$ and $\Vam$: the bundle $\mathcal{O}(1)$ (see Definition \ref{o}) is well defined over the whole $\mathbb{CP}^n$ and the bundle of affine sections of $\mathcal{O}(1,0)$ along the leaves of the $(1,0)$-foliation is trivial. Now the bundle of affine sections of $\Lp\tp\Lm^*$ is isomorphic to the bundle of affine sections of $\mathcal{O}(1,1)\otimes (\Lm^*)^2$ and taking the dual we get that $\Vhp$ is isomorphic to $\mathcal{O}(-1)\otimes (\Lm)^2\otimes\mathbb{C}^{2n+1}$. We will use this approach in discussion of examples in the next section.
\end{remark}
We can further summarize the obtained results in the following theorem.
\begin{thm}
Let $S$ be a compact K\"ahler manifold of complex dimension $n$ and $L$ a line bundle on $S$ such that it admits a unitary connection which curvature is proportional to the K\"ahler form. Then there exist, possibly non-compact holomorphic contact $(2n+1)$-manifold $Z$  containing $S$ as Legendrian submanifold which is a twistor space of a quaternion-K\"ahler manifold and such that $L$ is the contact line bundle restricted to $S$. Moreover $Z$ admits a holomorphic $ \mathbb{C}^{\mult}$-action fixing only $S$ and $\overline{S}$ corresponding to an isometric $S^1$-action.

Any two holomorphic contact manifolds with the above properties are biholomorphic on some neighbourhood of the set of all real twistor lines through $S$.
\end{thm}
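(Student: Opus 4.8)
The plan is to read this statement as a global, invariantly-phrased repackaging of Theorem~\ref{3}: given $(S,\omega,L)$ one builds $Z$, and $Z$ is pinned down up to biholomorphism near the real twistor lines through $S$. The one genuinely new point, beyond Theorem~\ref{3}, is that on a \emph{compact} base the connection is no longer free input but is forced by $L$ and $\omega$ through uniqueness of hermitian--Einstein metrics. First I would settle existence. Let $C$ be the proportionality constant, so $L$ carries a unitary connection with curvature $C\omega$; being compatible with the holomorphic structure and having curvature proportional to $\omega$, it is the Chern connection of a hermitian--Einstein metric on $L$, unique up to scale on the compact Kähler manifold $(S,\omega)$. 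Choosing a local square root $\cL\otimes\mathcal{O}(1)$ of $L$ with its induced connection and putting $\mathfrak{L}=L$, Theorem~\ref{3} yields a quaternion-Kähler manifold $M$ of scalar curvature $C$, with an isometric $S^1$-action having $S$ as a maximal component of its fixed-point set and no triholomorphic points there. Its twistor space $Z$ is the desired holomorphic contact $(2n+1)$-manifold: it is holomorphic contact because $M$ is quaternion-Kähler, the zero section $S=S^{1,0}$ is Legendrian (as one reads off from the explicit contact form of Lemma~\ref{distr}, which equals $dt$ along this section), and by the computation preceding Theorem~\ref{3} the contact line bundle restricts there to $\Lp^{\otimes 2}|_S=(\cL\otimes\mathcal{O}(1))^{\otimes 2}|_S=L$. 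Finally the $\mathbb{C}^{\mult}$-action by scalar multiplication in the fibres of $\Vhp$ and $\Vam$ fixes only the two zero sections $S^{1,0}\cong S$ and $S^{0,1}\cong\overline{S}$ and, by Theorem~\ref{co}, corresponds to an isometric $S^1$-action.

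Next I would prove uniqueness. Let $Z_1,Z_2$ be two holomorphic contact manifolds with all the stated properties. Each $Z_i$ is the twistor space of a quaternion-Kähler manifold $M_i$ carrying the isometric $S^1$-action corresponding to the $\mathbb{C}^{\mult}$-action; since this action fixes only $S$ and $\overline{S}$ and no entire twistor line, $S$ is a maximal fixed-point component free of triholomorphic points. Theorem~\ref{necessary} then shows that each $M_i$ arises locally near $S$ by the quaternionic Feix--Kaledin construction, with $S$ Kähler and with a unitary connection on $\cL\otimes\mathcal{O}(1)$ whose curvature is proportional to the induced Kähler form. Here the holomorphic square root $\cL\otimes\mathcal{O}(1)$ of $L$ is common to both, since the contact line bundle restricts to $L$ along $S$; the induced Kähler metric is the prescribed $\omega$; and the proportionality constant is the common scalar curvature $C$, fixed by $c_1(L)\propto[\omega]$. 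Both connections therefore have curvature $C\omega$, and uniqueness of the hermitian--Einstein metric on the compact Kähler manifold $(S,\omega)$ forces them to coincide. Consequently $M_1$ and $M_2$ arise from identical quaternionic Feix--Kaledin data, so the uniqueness part of Theorem~\ref{3} makes them equivariantly isomorphic near $S$; equivalently $Z_1$ and $Z_2$ are biholomorphic on a neighbourhood of the set of all real twistor lines through $S$.

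The hard part will be this recovery of the quaternionic Feix--Kaledin data from $(S,\omega,L)$ alone. In Theorem~\ref{3} the connection is supplied, whereas here it must be reconstructed, and this rests on two facts that use compactness of $S$ essentially: that the Kähler metric induced on $S$ by any admissible $Z_i$ is the prescribed $\omega$, and that a unitary, holomorphically compatible connection on $L$ with curvature $C\omega$ is unique, the latter being precisely uniqueness of the hermitian--Einstein metric on a line bundle over a compact Kähler manifold. Once both are in hand the problem collapses to the already-established uniqueness of Theorem~\ref{3}.
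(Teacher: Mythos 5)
Your proposal is correct and takes essentially the same route as the paper: pin down the connection on $L$ as the unique Hermitian--Einstein (Chern) connection via the Kobayashi--Hitchin correspondence on the compact K\"ahler base, then feed it into Theorems \ref{co} and \ref{3} for existence and into Theorem \ref{necessary} together with the uniqueness part of Theorem \ref{3} for the biholomorphism near the real twistor lines through $S$. The paper's own proof is just a much terser statement of exactly this argument, so your write-up differs only in the (welcome) level of detail.
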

\begin{proof}
The condition on curvature implies that the unitary connection on $L$ is Hermite-Einstein. By Kobayashi--Hitchin correspondence, Hermite-Einstein connections on any holomorphic line bundle over compact K\"ahler manifold are unique, hence the contact line bundle determines unique unitary connection  with curvature proportional to the K\"ahler form which by Theorems \ref{2}, \ref{co} and \ref{3} completes the proof.
\end{proof}

\section{Examples and applications}\label{ex}
In this section we will discuss the examples of $\mathbb{CP}^{2n+1}$ and the flag manifold $F_{1,n+1}(\mathbb{C}^{n+2})$ which already appeared in \cite{BC} and see how this fits into the setting of Remark \ref{bundle}.
\begin{example}
Let $S=\mathbb{CP}^n$ with the standard K\"ahler structure. As discussed in \cite{BC}, by taking $\cL$ trivial (which means that $\Lp=\mathcal{O}(1)$)by qFK we obtain $Z=\mathbb{CP}^{2n+1}$ which is the twistor space of the quaternionic projective space. In this case we consider affine sections of $\mathcal{O}(1,-1)$ along the leaves of the $(1,0)$-foliation, hence by Remark  \ref{bundle} it should mean that in the construction we obtain $Z$ by gluing two copies of the bundle $\mathcal{O}(1)\tp\mathbb{C}^{2n+1}$ over $\mathbb{CP}^n$ using the blow down from $\mathbb{P}(\mathcal{O}(-1,0)\oplus\mathcal{O}(0,-1))$. Indeed, fixing a projective $n$-space $S$ in $\mathbb{CP}^{2n+1}$ there is a natural embedding of the bundle $\mathcal{O}(1)\tp\mathbb{C}^{2n+1}$ into $\mathbb{CP}^{2n+1}$  such that $S$ is the zero section:
denoting points in $\mathbb{CP}^{2n+1}$ by $[x_0:\ldots :x_n:y_0:\ldots :y_n]$ we can take $S$ to be given by the equation $(y_0,\ldots,y_n)=0$ and $\overline{S}$ by $(x_0,\ldots,x_n)=0$ and then on the complement of $\overline{S}$, the space $Z$ can be identified as the total space of $\mathcal{O}(1)\tp\mathbb{C}^{2n+1}$. The blow down map from the affine part $\mathcal{O}(-1,1)$ over $S^\mathbb{C}=\mathbb{CP}^n\times\mathbb{CP}^n$ is given explicitly by sending element $l\otimes [y_0,\ldots,y_n]\in\mathcal{O}(1,-1)_{[x_0:\ldots :x_n]\times[y_0:\ldots :y_n]} $ to  $[x_0:\ldots :x_n:y_0:\ldots :y_n]$, where $l(x_0,\ldots, x_n)=1$ and we get the analogous result for the other half of the twistor space. Note that the $\mathbb{C}^{\mult}$-action on $\mathbb{CP}^{2n+1}$ given by $[\lambda x_0:\ldots :\lambda x_n:\lambda^{-1}y_0:\ldots :\lambda^{-1}y_n]$ fixes exactly submanifolds $S$ and $\overline{S}$ acts as the scalar multiplication in these fibres.

\end{example}

\begin{example}
Let $S=\mathbb{CP}^n$ with the standard K\"ahler structure and let $\cL$ be $\mathcal{O}(-\frac{1}{2})$ (hence $\Lp=\mathcal{O}(\frac{1}{2})$). As discussed in \cite{BC} in this case by qFK we obtain the flag manifold $Z=F_{1,n}(\mathbb{C}^{n+2})$ which is the twistor space of the Grassmannian $Gr_2(\mathbb{C}^{n+2})$. By Remark \ref{bundle}, $Z$ is the gluing of of two trivial rank $n+1$ bundles using the blow downs from $\mathcal{O}(-1,0)$ and $\mathcal{O}(0,-1)$ respectively and the canonical triviality of $\mathcal{O}(\frac{1}{2},\frac{1}{2})$.

 Indeed, let $\mathbb{C}^{n+2}$  with standard coordinates $(p,q_0,\ldots, q_n)$ be equipped with standard hermitian product and the $\mathbb{C}^{\mult}$-action $(\lambda^{-1}p,\lambda q_0,\ldots\lambda q_n)$ and denote by $a$ the line defined by the equation $q_0=\cdots=q_n=0$. Then the real structure on $Z$ is defined by $\perp$ and the $\mathbb{C}^{\mult}$-action on $\mathbb{C}^{n+2}$ induce a  $\mathbb{C}^{\mult}$-action on $Z$ fixing two $\perp$-conjugated submanifolds of $Z$ isomorphic to $\mathbb{CP}^n$ namely $\{(l,K)\in  F_{1,n}(\mathbb{C}^{n+2}): \ l=a\}$ and  $\{(l,K)\in  F_{1,n}(\mathbb{C}^{n+2}): \ K=a^{\perp}\}$. The $\mathbb{C}^{\mult}$-action defines fibrations and vector bundle structures on   $\{(l,K)\in  F_{1,n}(\mathbb{C}^{n+2}): \ l\cap a^{\perp}=0\}$ and $\{(l,K)\in  F_{1,n}(\mathbb{C}^{n+2}): \ K\cap a=0\}$ such that the action is the scalar multiplication in the fibres and its inverse respectively. Consider $(l,K)\in  F_{1,n}(\mathbb{C}^{n+2})$ such that $ l\cap a^{\perp}=0$ and set $k=K^{\perp}$. Let us write $l=\spanu(\tilde{P}+\tilde{Q})$ and $k=\spanu(\hat{P}+\hat{Q})$ for $\tilde{P},\hat{P}\in a$, $\tilde{Q},\hat{Q}\in a^{\perp}$ and $\tilde{P},\hat{Q}\neq 0$; without loss of generality we can fix $\tilde{P}$ to be defined by $p=1$. Then the action of an element $\lambda\in\mathbb{C}^{\mult}$ on $(l,K)$ gives $(\spanu(\lambda^{-1}\tilde{P}+\lambda\tilde{Q}),\spanu(\lambda\hat{P}+\lambda^{-1}\hat{Q})^{\perp})$ and the limit as $\lambda\rightarrow 0$ is $(a,\spanu(\hat{Q})^{\perp})$. We conclude that $\{(l,K)\in  F_{1,n}(\mathbb{C}^{n+2}): \ l\cap a^{\perp}=0\}$ is the trivial rank $n+1$ bundle over $\mathbb{P}(\mathbb{C}^{n+2}/a)\cong\mathbb{CP}(a^{\perp})$. This is because for fixed $\tilde{P},\tilde{Q}$ and a line $\spanu(\hat{Q})$ there is a unique line $\spanu(\hat{P}+\hat{Q})$, such that $\spanu(\tilde{P}+\tilde{Q})\perp \spanu(\hat{P}+\hat{Q})$, hence projecting flags to $\tilde{Q}$ gives the trivialization of the bundle. In analogous way we get that  $\{(l,K)\in  F_{1,n}(\mathbb{C}^{n+2}): \ K\cap a=0\}$ is the trivial  rank $n+1$ bundle over $\mathbb{P}(a^{\perp})$.
 
 The blow down map from $\mathcal{O}(-1,0)$ over $\mathbb{P}(a^{\perp})\times\mathbb{P}(a^{\perp})$ to  $\{(l,K)\in  F_{1,n}(\mathbb{C}^{n+2}): \ l\cap a^{\perp}=0\}$ is given explicitly by sending $(\tilde{Q},\spanu (\hat{Q}))$ to the flag  $\spanu(\tilde{P}+\tilde{Q})\subset \spanu(\hat{P}+\hat{Q})^{\perp}$ and the gluing with $\mathcal{O}(0,-1)$ is given by the hermitian product which manifest itself in the equation  $\spanu(\tilde{P}+\tilde{Q})\perp \spanu(\hat{P}+\hat{Q})$.  Recall that we can use the triviality of $\mathcal{O}(\frac{1}{2},\frac{1}{2})$ (given by complexified hermitian product) to identify $\mathcal{O}(-1,0)$ with $\mathcal{O}(-\frac{1}{2},\frac{1}{2})$ and $\mathcal{O}(0,-1)$ with $\mathcal{O}(\frac{1}{2},-\frac{1}{2})=(\mathcal{O}(-\frac{1}{2},\frac{1}{2}))^{*}$. It is straightforward to check that after this identification the gluing is $t\mapsto t^*$, as required in qFK.
\end{example}
\subsection*{C-map}
The presented construction of quaternion-K\"ahler metric extends the Hitchin twistorial description of c-map construction ( \cite{Hit2}, see also \cite{ACDM}, \cite{Hay}, \cite{MaSw} and \cite{CH} for non-metric case) by allowing the fixed points of the $S^1$-action. Indeed, let $Z$ be a quaternion-K\"ahler manifold obtained by qFK from a K\"ahler manifold $S$ and let $\cD^{1,0}$ be the divisor from Lemma \ref{div} and $\cD^{0,1}$ its conjugate (corresponding to the complex structure $-I$). Then  the $\mathbb{C}^{\mult}$ quotient of the total space of the the divisor bundle of $\cD^{1,0}-\cD^{0,1}$ over $Z$ is the hyperk\"ahler manifold obtained by standard Feix construction from $S$. This has been shown in the proof of Theroem 6 from \cite{BC} for $S$ K\"ahler-Einstein and the argument extends to our situation. As a consequence we obtain the following theorem.
\begin{thm}
Let $S$ be a K\"ahler manifold and $(\cL,\nabla)$ a holomorphic line bundle with a unitary connection whose curvature is proportional to the K\"ahler form. Let $M$ be a quaternion-K\"ahler obtained by the qFK construction form $(S,\cL,\nabla)$ and  $\hat{M}$ the hyperk\"ahler manifold obtained by the standard Feix construction from $S$. Then $M$ and $\hat{M}$ are related by the Haydys hyperk\"ahler/quaternion-K\"ahler correspondence.
\end{thm}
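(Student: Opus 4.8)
The plan is to reduce the assertion to the twistorial form of the correspondence. Recall that in Hitchin's twistorial description \cite{Hit2} (which realizes the Haydys construction \cite{Hay}) the hyperk\"ahler manifold $\hat M$ associated with $M$ is read off from the twistor space $Z$ of $M$ as follows: one forms the line bundle $\mathcal{O}(\cD^{1,0}-\cD^{0,1})$ attached to the difference of the two components of the zero divisor of the contact moment map $\theta(\hat X)$, removes its zero section, and takes the $\mathbb{C}^{\mult}$-quotient of its total space. As noted just before the statement, this quotient recovers, on the level of twistor spaces, the standard Feix hyperk\"ahler construction, and in the K\"ahler--Einstein case the identification is exactly the content of the proof of Theorem~6 of \cite{BC}. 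Thus the theorem amounts to checking that, when $Z$ arises by qFK from $(S,\cL,\nabla)$, this quotient is the Feix twistor space built from $S$ \emph{alone}, independently of the choice of $\cL$.

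First I would describe the divisors explicitly inside the qFK model of Diagram~\ref{dig1}. By Lemma~\ref{div} and Proposition~\ref{mtc}, $\cD^{1,0}$ is the rank-$n$ subbundle of $\Vhp$ which in each fibre $\Vhp_{\overline x}$ is the annihilator of the affine section parallel along the leaf of the $(1,0)$-foliation, and $\cD^{0,1}\subset\Vam$ is its conjugate. Passing to the standardization of Lemma~\ref{stand} (with $p_i=\tilde a_i$ and $f_0=e^{\eta}$ as in the proof of Lemma~\ref{distr}) turns both halves of $Z$ into the flat halves of the Feix construction, and in these coordinates $\cD^{1,0}$ and $\cD^{0,1}$ are cut out by the same linear equations as the corresponding divisors of the hyperk\"ahler model. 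Since the Feix hyperk\"ahler twistor space is itself the output of the same recipe with $\cL$ trivial and curvature $c=0$, the two pictures differ only through the twist by $\cL$ entering $\Lp$ and $\Lm$.

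The heart of the matter is to see that this twist disappears after the $\mathbb{C}^{\mult}$-quotient of $\mathcal{O}(\cD^{1,0}-\cD^{0,1})$. Here I would combine two facts already established: along the zero section the contact line bundle restricts to $\Lp^{\otimes 2}$ (Section~\ref{gl}), while the complexified hermitian pairing gives the canonical triviality of $\Lp\tp\Lm$ used in the proof of Theorem~\ref{3}. Together these identify the difference divisor bundle $\mathcal{O}(\cD^{1,0}-\cD^{0,1})$, up to a canonically trivial factor, with a bundle depending only on $\mathcal{O}(1)$ and the K\"ahler data of $S$; the contributions of $\cL^{1,0}$ and $\cL^{0,1}$ pair off and cancel. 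Consequently the $\mathbb{C}^{\mult}$-quotient is canonically isomorphic to the one obtained in the $c=0$ case, i.e. to the Feix twistor space of $\hat M$, and one checks that the isomorphism intertwines the real structures and carries the twistor lines of $Z$ to those of the hyperk\"ahler twistor space, so that all the data of the correspondence match.

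The step I expect to be the main obstacle is precisely this transfer of the proof of Theorem~6 of \cite{BC} from the K\"ahler--Einstein setting to the present one: one must verify that the K\"ahler--Einstein hypothesis is used there only to pin $\cL$ down as a root of the canonical bundle, and nowhere else, and that after forming the divisor difference $\cD^{1,0}-\cD^{0,1}$ and quotienting the dependence on $\cL$ is genuinely canceled rather than merely absorbed into an isomorphism. In effect this is the statement that the whole $\cL$-family of quaternion-K\"ahler manifolds over a fixed $S$ corresponds under the Haydys correspondence to one and the same $\hat M$; making the cancellation canonical and confirming its compatibility with the hyperk\"ahler metric data is where the real work lies.
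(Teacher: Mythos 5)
Your proposal follows essentially the same route as the paper: the paper's own argument is exactly the identification of $\hat M$ via the $\mathbb{C}^{\mult}$-quotient of the total space of the divisor bundle of $\cD^{1,0}-\cD^{0,1}$ over $Z$ (with $\cD^{1,0}$ from Lemma \ref{div}), together with the assertion that the proof of Theorem~6 of \cite{BC} in the K\"ahler--Einstein case extends to this situation. In fact the paper is even terser than your sketch — it does not spell out the cancellation of the $\cL$-twist that you correctly flag as the substantive step — so your proposal matches, and somewhat elaborates on, the published argument.
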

%Now constider the complexification. The corresponding connection on $\Lp$ is $d+c\Sigma_{i=1}^n p_i(z_i,\tilde{z}_i) dz_i$ is flat along the leaves of the $(1,0)$ foliation and let $f$ be the chosen section parallel along the leaves. Let $h=log f$ to be a (local) logarythm  of $f$. Then $de^h+e^hc\Sigma_{i=1}^n p_i(z_i,\tilde{z}_i) dz_i$ vanishes along the leaves and hence $d^{1,0}h=-c\Sigma_{i=1}^n p_i(z_i,\tilde{z}_i) dz_i$. As $dd^{1,0}h=d^{0,1}d^{1,0}h$, it follows that $\frac{h}{-c}$ is a complexified  K\"ahler potential for $\omega$. Moreover, the condition that $\nabla$ is unitary means that we can take $h$ to be real valued positive function (maybe only complexification of!!!).

\acknowledge The author would like to thank Jaros\l{}aw Buczy\'nski and David Calderbank for helpful comments and Vincente Cort\'es for his remark about necessity of K\"ahler-Einstein condition. This work is supported by the National Science Center, Poland, project ``Complex contact manifolds and geometry of secants'', 2017/26/E/ST1/00231, by the grant 346300 for IMPAN from the Simons Foundation and the matching 2015-2019 Polish MNiSW fund.

\bibliographystyle{amsalpha}

\providecommand{\bysame}{\leavevmode\hbox to3em{\hrulefill}\thinspace}
\providecommand{\MR}{\relax\ifhmode\unskip\space\fi MR }
% \MRhref is called by the amsart/book/proc definition of \MR.
\providecommand{\MRhref}[2]{%
 \href{http://www.ams.org/mathscinet-getitem?mr=#1}{#2}
}
\providecommand{\href}[2]{#2}

\end{document}